\newtheorem{theorem}{Theorem}[section]
\newtheorem{lemma}[theorem]{Lemma}
\newtheorem{corollary}[theorem]{Corollary}
\newcommand{\Aut}{{\rm Aut}}
\newcommand{\Sym}{\hbox{{\rm Sym}}}
\newcommand{\ZZ}{\mathbb{Z}}
\newcommand{\SBP}{\hbox{{\rm SBP}}}
\newcommand{\CDC}{\hbox{{\rm CDC}}}
\newcommand{\alt}{{\rm{alt}}}
\newcommand{\Dart}{{\rm D}}
\newcommand{\PGammaL}{{\rm P}\Gamma{\rm L}}
\newcommand{\la}{\langle}
\newcommand{\ra}{\rangle}
\newcommand{\V}{{\mathcal{V}}}
\newcommand{\D}{{\mathcal{D}}}
\newcommand{\U}{{\rm{U}}} 
\renewcommand{\wr}{\mathop{\hbox{wr}}}
\newcommand{\A}{{\mathcal{A}}}
\newcommand{\AAG}{\mathrm {A^2G}}
\newcommand{\DCyc}{\mathrm {DCyc}}
\newcommand{\dAAG}{\mathrm {A^2D}}
\begin{document}

\title[Separated Box Product]{The Separated Box Product of Two Digraphs}

\author[P.\ Poto\v{c}nik]{Primo\v{z} Poto\v{c}nik}
\address{Primo\v{z} Poto\v{c}nik,\newline
Faculty of Mathematics and Physics, University of Ljubljana, \newline Jadranska 19, SI-1000 Ljubljana, Slovenia;
\newline also affiliated with: \newline 
IAM, University of Primorska,\newline
 Muzejski trg 2, SI-6000 Koper, Slovenia;
 \newline and: \newline
 IMFM,
 Jadranska 19, SI-1000 Ljubljana, Slovenia.
 } 
\email{primoz.potocnik@fmf.uni-lj.si}

\thanks{Supported in part by the Slovenian Research Agency, projects J1-5433, J1-6720, and P1-0294}


\author[S.\ Wilson]{Steve Wilson}
\address{Steve Wilson,\newline
 Northern Arizona University, Department of Mathematics and Statistics, \newline
 Box 5717, Flagstaff, AZ 86011, USA
 \newline also affiliated with: \newline 
FAMNIT, University of Primorska,\newline
 Glagolja\v{s}ka 8, SI-6000 Koper, Slovenia; }
 \email{stephen.wilson@nau.edu}

\subjclass[2000]{20B25}
\keywords{digraph, graph, transitive, product}

\begin{abstract}
A new product construction of graphs and digraphs, based on the standard box product of graphs and called
the separated box product, is presented, and several of its properties are discussed. 
Questions about the symmetries of the product and their relations to symmetries of the factor graphs are considered.   An application of this construction to the case of
tetravalent edge-transitive graphs is discussed in detail.
\end{abstract}

\maketitle


\section{Introduction}
\label{sec:intro}

A {\em dart} (also known as an {\em arc}) of a graph $\Gamma$ 
is an ordered pair of adjacent vertices of $\Gamma$, and the set of
all darts of $\Gamma$ is denoted by $\D(\Gamma)$. A graph $\Gamma$ is {\em dart-transitive}
provided that its symmetry group $\Aut(\Gamma)$ acts transitively on $\D(\Gamma)$.

It was proved in \cite{PSV} that
apart from a well-understood infinite family of graphs and finitely many exceptions,
the order of the vertex-stabiliser $G_v$ in the symmetry group $G$ of 
a connected tetravalent dart-transitive graph of order $n$
satisfies the inequality $n\ge 2|G_v|\log_2(|G_v|/2)$.
The largest amongst the finite set of exceptions is a graph (let's denote it by 
$\Gamma_{8100}$) of order $8100$ (appearing in the last line of \cite[Table 2]{PSV})
whose symmetry group is isomorphic to the group $\PGammaL(2,9)\, \hbox{wr}\, C_2$,
and has the vertex-stabiliser of  large order,  $512$. 

This graph  was originally constructed
as a coset graph of the group $\PGammaL(2,9)\, \hbox{wr}\, C_2$, but an interesting combinatorial
construction of $\Gamma_{8100}$ was also given in \cite[Section 2.2]{PSV}: For a  graph $\Lambda$,
 let $\AAG(\Lambda)$, the "squared-arc graph" of $\Lambda$, be the graph with vertex-set being $\D(\Lambda) \times \D(\Lambda)$ and with two vertices
 $(x,y), (w,z) \in \D(\Lambda) \times \D(\Lambda)$ adjacent in $\AAG(\Lambda)$ if and only if $y=w$,
 $x=(v_1,v_2)$ and $z=(v_2,v_3)$ for some vertices $v_1,v_2,v_3$ of $\Lambda$ such that $v_1\not = v_3$.
 The graph $\Gamma_{8100}$ is then isomorphic to the graph $\AAG(\Lambda)$, where $\Lambda$ is the
 Tutte $8$-cage (the unique dart-transitive $3$-regular graph on $30$ vertices with girth $8$).
 
 It is not at all obvious in what way (if at all)  the symmetry group of the Tutte's $8$-cage $\Lambda$,
 isomorphic to $\PGammaL(2,9)$, yields the group $\PGammaL(2,9)\, \hbox{wr}\, C_2$ acting on $\AAG(\Lambda)$.

  Our endeavour to understand the relationship between $\Aut(\Lambda)$ and $\Aut(\AAG(\Lambda))$ led
us to a discovery of a surprisingly  simple product operation on digraphs, called the {\em separated box product}, 
which significantly generalises the
 $\AAG$ construction on one hand and explains many symmetries that $\AAG(\Lambda)$ possesses on the other hand.

  %
 
   It is the aim of this paper to present the separated box product construction and some of its properties.
   The construction is described in Section~\ref{ssec:Con}. In Section~\ref{ssec:sym}, the
   symmetry properties of the resulting (di)graph are discussed, and in Section~\ref{ssec:connect}
   the question of its connectedness of is addressed. 
   The relationship between the $\AAG$ construction and the separated box product is explained in Section~\ref{sec:AAG}.  Here, we prove the first of the two major theorems in this paper, Theorem \ref{the:A2G}, which uses the separated box product to explain the symmetry groups of the graphs constructed as above from  sufficiently symmetric cubic graphs. 
   Finally, in Section~\ref{sec:4}, the case where the separated box product yields
   a tetravalent edge-transitive graphs is discussed in  detail.  In this section, we prove   Theorem \ref{the:type}, which determines the possibilities for the symmetry type of the product based on the relationships between the factor digraphs and their reverses.

\section{Definitions}
\label{sec:def}

	In many parts of graph theory, it is more natural to define ideas in terms of directed graphs and then consider a {\em graph} to be just a special case of a digraph.  We will take that approach here.   A {\em digraph} is a pair $\Gamma = (\V, \D)$ in which $\V$ is a finite non-empty collection of things called {\em vertices} 
and $\D$ is a collection of ordered pairs of distinct vertices.  An element $(u,v)$ of $\D$ will be called a {\em dart} with {\em initial}
vertex $u$ and {\em terminal} vertex $v$,
and we will picture it as an arrow leading from $u$ to $v$. We let $\D(\Gamma) = \D$ and $\V(\Gamma) = \V$ in this case.
A {\em $2$-dart} of a digraph $(\V, \D)$ is a pair $(x,y)$ of darts in $\D$ such that the terminal vertex of $x$ coincides
  with the initial vertex of $y$ while the initial vertex of $x$ does not coincide with the terminal vertex of $y$.

The {\em out-neighbourhood} and the 
{\em in-neighbourhood} of a vertex $v\in \V$ are defined as the sets $\{u\in \V : (v,u) \in \D\}$ and
$\{u\in \V : (u,v) \in \D\}$ and denoted $\Gamma^+(v)$ and $\Gamma^-(v)$, respectively.
The cardinalities of $\Gamma^+(v)$ and $\Gamma^-(v)$ are called the {\em out-valence} and
the  {\em in-valence} of $v$, respectively. A vertex of out-valence (in-valence) $0$ is called 
a {\em sink} ({\em source}, respectively). A digraph in which the in-valence as well as  the out-valence of every
vertex is equal some constant $k$ is called {\em $k$-valent}.

   For a digraph $\Gamma=(\V,\D)$ and a subset $\A \subset \D$, let $\A^{-1} = \{(u,v) : (v,u) \in \D\}$, and let
  $\Gamma^{-1}$ be the digraph $(\V,\D^{-1})$, called the {\em reverse of $\Gamma$}.  If  $\Gamma^{-1}$ is isomorphic to $\Gamma$, we say that $\Gamma$ is {\em reversible} and any isomorphism between $\Gamma$ and $\Gamma^{-1}$
  is called a {\em reversal} of $\Gamma$.
A digraph $\Gamma$ such that $\Gamma=\Gamma^{-1}$ is called a {\em graph}. If $(u,v)$ is
a dart of a graph $\Gamma$, then so is $(v,u)$;
 in this case, we call the pair $\{(u,v), (v, u)\}$ an {\em edge} of the graph and denote it by $uv$ (or $vu$).  We picture $uv$ as an  undirected link joining $u$ and $v$.

	At the other extreme, if $\D$ and $\D^{-1}$ are disjoint, we call  $(\V,\D)$ an {\em orientation}.
  The {\em underlying graph} of a digraph $\Gamma=(\V, \D)$ is the graph $(\V, \D \cup \D^{-1})$, denoted $\U\Gamma$.
  
    A digraph is said to be {\em connected}   provided that its underlying graph is connected and it is said to be {\em bipartite}
   if its underlying graph is bipartite. 
  
  The set of all permutations of a given set $\Omega$ will be denoted by $\Sym(\Omega)$.
For $g\in\Sym(\Omega)$ and $\omega\in \Omega$, the image of $\omega$ under $g$ will be written as $\omega^g$,
while the product of two permutations $g,h \in \Sym(\Omega)$ is defined by $\omega^{(gh)} = (\omega^g)^h$.

A {\em symmetry} (or an {\em symmetry}) of a digraph $\Gamma = (\V, \D)$ is a permutation of $\V$ which,
in its natural action on $\V\times \V$, preserves $\D$.
The symmetries of $\Gamma$, together with the above defined product, form a group, denoted $\Aut(\Gamma)$.  
Note that any reversal of $\Gamma$ normalises $\Aut(\Gamma)$ and a product of two reversals is
always a symmetry of $\Gamma$.
The set $\Aut^*(\Gamma)$ of all symmetries and all reversals of $\Gamma$ thus forms a group in which
$\Aut(\Gamma)$ has index at most two.

If $\Gamma$ is a digraph and $G\le \Aut(\Gamma)$ such that
 $G$ is transitive on $\V(\Gamma)$ ($\D(\Gamma)$), then
$\Gamma$ is said to be {\em $G$-vertex-transitive} ({\em $G$-dart-transitive}, respectively).

The  {\em box product}, also known as the {\em Cartesian product} \cite{klavzar} of two digraphs $\Gamma_1$ and $\Gamma_2$,
denoted by $\Gamma_1\Box \Gamma_2$, is the digraph with vertex-set $\V(\Gamma_1)\times\V(\Gamma_2)$ and
a pair $\bigl((v_1,v_2),(u_1,u_2)\bigr)$ of vertices $(v_1,v_2), (u_1,u_2) \in \V(\Gamma_1)\times\V(\Gamma_2)$
being a dart of $\Gamma_1\Box \Gamma_2$ if and only if $(v_1,u_1) \in \D(\Gamma_1)$ and $u_2 = v_2$ (these are the {\em horizontal} edges) or $v_1 = u_1$ and $(v_2,u_2) \in \D(\Gamma_2)$  (these are the {\em vertical} edges). 

\section{The Separated Box Product}
\label{sec:SBP}

\subsection{Construction and basic properties}
\label{ssec:Con}

We define here a construction, the {\em separated box product} of two digraphs, which generalizes that of $\AAG(\Lambda)$.

For digraphs $\Gamma_1 =(\V_1, \D_1)$ and $\Gamma_2 = (\V_2, \D_2)$  
we define a digraph $\Gamma = (\V, \D)$, where $\V = \V_1\times\V_2\times\ZZ_2$, and darts in $\D$ are all $((a, x, 0), (b,x,1))$ where $(a, b) \in\D_1, x\in \V_2$, together with all  $((a, x,1), (a, y, 0))$ where $a \in\V_1, (x,y)\in \D_2$.  We refer to $\Gamma$ 
as $\SBP(\Gamma_1, \Gamma_2)$. 
Figure \ref{fig:BoxP} shows some of the darts in $\Gamma$.

\begin{figure}[hhh]
\begin{center}
\epsfig{file=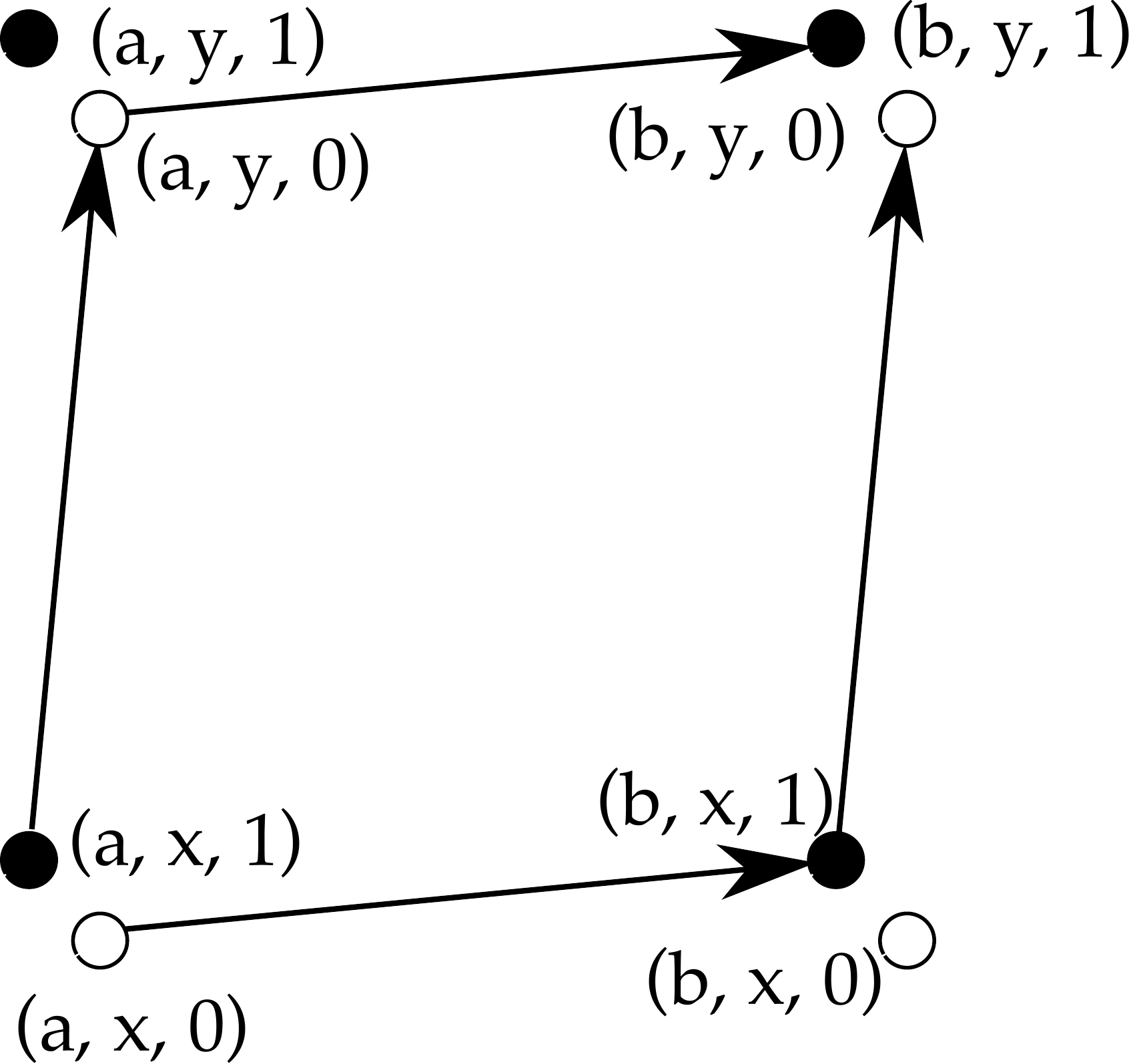,height=40mm}
\end{center}
\caption{Edges in the SBP($\Gamma_1, \Gamma_2$)}
\label{fig:BoxP}
\end{figure}

As we shall see, $\Gamma = \SBP(\Gamma_1, \Gamma_2)$ is a digraph which is often disconnected
(see Section~\ref{ssec:connect}).   If  $\Gamma $ is sufficiently symmetric, the components are isomorphic.  In this case, we will use the notation $\Gamma_1 \#\Gamma_2$ to stand for a single connected component of the 
underlying graph of $\Gamma$.

In accordance with Figure \ref{fig:BoxP}, we refer to vertices with third coordinate 0 as {\em white} vertices and those with third coordinate 1 as {\em black}.  Further, we refer to darts of the first type as {\em horizontal darts}, and those of the second type as {\em vertical darts}, just as we do in the ordinary box product.  

Identifying each $(a, x, 0)$ with $(a, x, 1)$ results in the ordinary box product of the two digraphs, hence the name of the construction.

Since horizontal darts always point from a white vertex to a black vertex, the reverse of a horizontal dart is never a dart of $\Gamma$. Similarly, since vertical darts always point from a black vertex to a white vertex, the reverse of a vertical dart is never a dart. This shows that the separated box product of two digraphs is an orientation.

Furthermore,
 observe that the out- and in-valence of a white vertex $(a,x,0)$ are equal to the out-valence of $a$ in $\Gamma_1$ and
the in-valence of $x$ in $\Gamma_2$, respectively. Similarly, the out- and in-valence of a black vertex $(a,x,1)$ 
equal the out-valence of $x$ in $\Gamma_2$ and the in-valence of $a$ in $\Gamma_1$, respectively.
In particular, if both $\Gamma_1$ and $\Gamma_2$ are $k$-valent, then so is $\SBP(\Gamma_1,\Gamma_2)$.

\subsection{Symmetries of the separated box product}
\label{ssec:sym}

The separated box product clearly inherits some symmetries from its parents.  The following result is straightforward and the
proof is left to the reader.

\begin{lemma}
\label{L2}
Let $\Gamma_1$ and $\Gamma_2$ be digraphs without sources and sinks, 
let $\Gamma=\SBP(\Gamma_1,\Gamma_2)$, and let $G_j \le \Aut(\Gamma_j)$ for $j\in \{1,2\}$.
Then, by letting
$$(a,x,i)^{(g_1,g_2)} = (a^{g_1}, x^{g_2}, i)$$
for every $(a,x,i)$ of $\Gamma$ and every $(g_1,g_2) \in G_1 \times G_2$, the group
 $G_1 \times G_2$ acts faithfully on $\Gamma$ as a group of symmetries.
Moreover:
\begin{itemize}
\item[{\rm (i)}]
If $G_1$ and $G_2$ act transitively on the vertices of the respective digraphs, then $G_1\times G_2$
has two orbits on the vertices of  $\Gamma$, one being the 
set of white vertices and the other the set of  black vertices.
\item[{\rm (ii)}]
If $G_1$ and $G_2$ act transitively on the darts of the respective digraphs, then $G_1\times G_2$
has two orbits on the darts of  $\Gamma$, one being the 
set of horizontal darts and the other the set of vertical darts.
\end{itemize}

If $\Gamma_1$ and $\Gamma_2$ possess  reversals $\sigma_1$ and $\sigma_2$, respectively, then the permutation $\sigma$ of
$\V(\Gamma)$, defined by 
$$
(a,x,i)^{\sigma} = (a^{\sigma_1}, x^{\sigma_2},1-i),
$$
is a reversal of $\Gamma$.
\end{lemma}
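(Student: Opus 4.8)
The statement has two parts. The first part asserts that $G_1\times G_2$ acts faithfully as a group of symmetries, together with the two orbit claims (i) and (ii). The second part asserts that the map $\sigma$ built from reversals $\sigma_1,\sigma_2$ is a reversal of $\Gamma$. Throughout, the main task is simply to verify that the given maps send darts to darts (and, for $\sigma$, darts to reverse-darts), since these are bijections of $\V(\Gamma)$ by construction. The hard part is really just bookkeeping: keeping the two dart types (horizontal and vertical) straight and checking the condition that the maps respect the colour (third-coordinate) structure.

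First I would confirm that the assignment $(a,x,i)^{(g_1,g_2)}=(a^{g_1},x^{g_2},i)$ is a permutation of $\V(\Gamma)=\V_1\times\V_2\times\ZZ_2$; this is immediate because $g_1,g_2$ are permutations of $\V_1,\V_2$ and the third coordinate is fixed. For the action to be by symmetries, I would check that $\D$ is preserved. Take a horizontal dart $\bigl((a,x,0),(b,x,1)\bigr)$ with $(a,b)\in\D_1$. Its image is $\bigl((a^{g_1},x^{g_2},0),(b^{g_1},x^{g_2},1)\bigr)$; since $g_1\in\Aut(\Gamma_1)$ gives $(a^{g_1},b^{g_1})\in\D_1$ and the second coordinate $x^{g_2}$ is common to both endpoints, this is again a horizontal dart. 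The vertical case $\bigl((a,x,1),(a,y,0)\bigr)$ with $(x,y)\in\D_2$ is entirely symmetric, using $g_2\in\Aut(\Gamma_2)$. That the assignment is a homomorphism $G_1\times G_2\to\Aut(\Gamma)$ follows from the componentwise definition of the action. Faithfulness: if $(g_1,g_2)$ fixes every vertex, then $g_1$ fixes every vertex of $\Gamma_1$ and $g_2$ every vertex of $\Gamma_2$, so $g_1=g_2=\id$; here the hypothesis of no sources or sinks is what guarantees that $\Gamma_1,\Gamma_2$ (and hence $\Gamma$) have no isolated vertices, so that fixing all vertices is a genuine condition.

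For the orbit statements, note that the colour (third coordinate) is invariant under the action, so the white and black sets are each unions of orbits; it remains to show each is a single orbit. For (i), vertex-transitivity of $G_1$ on $\V_1$ and of $G_2$ on $\V_2$ lets me carry any white $(a,x,0)$ to any white $(a',x',0)$ by choosing $g_1$ with $a^{g_1}=a'$ and $g_2$ with $x^{g_2}=x'$; the same works for black vertices. For (ii), since the action preserves the horizontal/vertical distinction (as verified above), these two dart-sets are unions of dart-orbits, and dart-transitivity of $G_1$ and $G_2$ lets me move any horizontal dart to any other (matching the $\Gamma_1$-dart via $g_1$ and the common $\V_2$-coordinate via $g_2$), and similarly for vertical darts.

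Finally, for the reversal $\sigma$, I would first check it is a permutation: it is the product of the coordinate maps $\sigma_1$ on $\V_1$, $\sigma_2$ on $\V_2$ (both bijections) and the colour-swap $i\mapsto 1-i$ on $\ZZ_2$. To see $\sigma$ is a reversal, I must show $\sigma$ carries each dart of $\Gamma$ to the \emph{reverse} of a dart, i.e. $\sigma$ is an isomorphism $\Gamma\to\Gamma^{-1}$. Apply $\sigma$ to a horizontal dart $\bigl((a,x,0),(b,x,1)\bigr)$ with $(a,b)\in\D_1$; the image is the pair $\bigl((a^{\sigma_1},x^{\sigma_2},1),(b^{\sigma_1},x^{\sigma_2},0)\bigr)$. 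Since $\sigma_1$ is a reversal of $\Gamma_1$, $(a,b)\in\D_1$ forces $(b^{\sigma_1},a^{\sigma_1})\in\D_1$, so this image pair is precisely a vertical dart of $\Gamma$ \emph{reversed}—matching the vertical-dart template $\bigl((a',y',1),(a',z',0)\bigr)$ after reversal. The vertical-dart case is symmetric, using that $\sigma_2$ reverses $\Gamma_2$: a vertical dart maps to a reversed horizontal dart. Thus $\sigma$ swaps the roles of horizontal and vertical darts while reversing their direction, which is exactly what it means for $\sigma$ to be an isomorphism onto $\Gamma^{-1}$. The only subtlety worth flagging is that $\sigma$ interchanges the two dart types, so one must check both templates and confirm the direction-reversal in each, rather than expecting $\sigma$ to preserve dart type.
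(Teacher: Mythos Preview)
The paper itself does not give a proof of this lemma (it says the result is straightforward and leaves it to the reader), and your overall approach---direct verification on the two dart types---is exactly the right one. The action of $G_1\times G_2$ and parts (i), (ii) are handled correctly. One small remark: the faithfulness of the action on $\V(\Gamma)=\V_1\times\V_2\times\ZZ_2$ does not actually require the absence of sources and sinks; if $(g_1,g_2)$ fixes every triple $(a,x,i)$ then $g_1$ and $g_2$ fix every vertex of $\V_1$ and $\V_2$ outright. (The no-sources/no-sinks hypothesis is genuinely used in~(ii), since dart-transitivity of $G_2$ only yields vertex-transitivity of $G_2$ once every vertex is the initial vertex of some dart; you used this implicitly when matching the common $\V_2$-coordinate via $g_2$.)

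There is, however, a concrete bookkeeping error in your treatment of the reversal $\sigma$. The image of a horizontal dart $\bigl((a,x,0),(b,x,1)\bigr)$ under $\sigma$ is $\bigl((a^{\sigma_1},x^{\sigma_2},1),(b^{\sigma_1},x^{\sigma_2},0)\bigr)$, which has \emph{equal second coordinates} and \emph{different first coordinates}. This cannot match the vertical-dart template $\bigl((c,y,1),(c,z,0)\bigr)$, which requires equal first coordinates. What the image actually is is the reverse of the \emph{horizontal} dart $\bigl((b^{\sigma_1},x^{\sigma_2},0),(a^{\sigma_1},x^{\sigma_2},1)\bigr)$, valid because $(b^{\sigma_1},a^{\sigma_1})\in\D_1$. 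Likewise, a vertical dart is sent to a reversed vertical dart. So $\sigma$ \emph{preserves} the horizontal/vertical distinction while reversing direction; it does not swap the two types as you asserted. (The paper relies on precisely this fact just after the lemma, noting that $\sigma$ interchanges the colour classes while preserving horizontal edges.) Your conclusion that $\sigma$ is a reversal is still correct, but the justification as written does not establish it: the ``vertical dart reversed'' claim is false, and the subsequent summary sentence should be amended accordingly.
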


Observe that the  reversal $\sigma$ in the above lemma interchanges the color classes of vertices in $\SBP(\Gamma_1,\Gamma_2)$.
Hence an obvious corollary:

\begin{corollary}
\label{cor:LR}
Let $\Gamma_1$ and $\Gamma_2$ be digraphs without sources and sinks,
possessing reversals $\sigma_1$ and $\sigma_2$, 
let $\Gamma=\SBP(\Gamma_1,\Gamma_2)$, and let $G_j \le \Aut(\Gamma_j)$ for $j\in \{1,2\}$ be transitive the darts
of the respective digraphs.
Furthermore, let $G_1\times G_2$ and $\sigma$ act upon $\Gamma$ as in Lemma~\ref{L2}. 

Then $\la G_1\times G_2, \sigma\ra$ acts vertex-transitively as a group of symmetries on the underlying graph $\U\Gamma$ of $\Gamma$
and has two orbits on the darts and on the edges of $\U\Gamma$, these being the sets of
vertical and of horizontal darts (edges) of $\U\Gamma$.
Moreover if both $\Gamma_1$ and $\Gamma_2$ are of valence at least $2$, the stabiliser of a dart in $G_1\times G_2$
is non-trivial.
\end{corollary}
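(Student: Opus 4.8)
The plan is to verify that $\la G_1\times G_2,\sigma\ra$ consists of symmetries of $\U\Gamma$, then to determine its orbits on vertices, darts and edges by combining Lemma~\ref{L2} with the explicit action of $\sigma$, and finally to compute a dart-stabiliser directly. First I would note that each element of $G_1\times G_2$ is a symmetry of $\Gamma$, hence of $\U\Gamma$, and that $\sigma$, being a reversal of $\Gamma$, maps $\D$ onto $\D^{-1}$ and therefore preserves the dart-set $\D\cup\D^{-1}$ of $\U\Gamma$; thus $\la G_1\times G_2,\sigma\ra\le\Aut(\U\Gamma)$. Since $G_j$ is transitive on $\D(\Gamma_j)$ and $\Gamma_j$ has no sinks, every vertex is the initial vertex of some dart, so $G_j$ is vertex-transitive on $\Gamma_j$; hence the hypotheses of both parts of Lemma~\ref{L2} are met.

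For vertex-transitivity I would invoke Lemma~\ref{L2}(i): $G_1\times G_2$ has exactly the two orbits of white and of black vertices. As observed immediately after Lemma~\ref{L2}, $\sigma$ interchanges these two colour classes, so $\la G_1\times G_2,\sigma\ra$ fuses them into a single orbit, giving vertex-transitivity on $\U\Gamma$.

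The heart of the argument is the orbit structure on darts. By Lemma~\ref{L2}(ii), $G_1\times G_2$ has the two orbits $H$ (horizontal) and $V$ (vertical) on $\D$. Because the action of $G_1\times G_2$ on ordered pairs commutes with reversal of darts, the reversed sets $H^{-1}$ and $V^{-1}$ are likewise single orbits, and since $\Gamma$ is an orientation these four sets are pairwise disjoint and exhaust the dart-set $\D\cup\D^{-1}$ of $\U\Gamma$. I would then compute the effect of $\sigma$ on a representative horizontal dart $\bigl((a,x,0),(b,x,1)\bigr)$ with $(a,b)\in\D_1$: its image is $\bigl((a^{\sigma_1},x^{\sigma_2},1),(b^{\sigma_1},x^{\sigma_2},0)\bigr)$, whose reverse is again a horizontal dart because $\sigma_1$ is a reversal of $\Gamma_1$ and so $(b^{\sigma_1},a^{\sigma_1})\in\D_1$. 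Thus $\sigma$ carries $H$ into $H^{-1}$, and (being a bijection with $|H|=|H^{-1}|$) onto it; the analogous computation on a vertical dart shows $\sigma$ carries $V$ onto $V^{-1}$. Consequently $\la G_1\times G_2,\sigma\ra$ has exactly the two dart-orbits $H\cup H^{-1}$ and $V\cup V^{-1}$, which are precisely the horizontal and vertical darts of $\U\Gamma$; passing to the edges $\{d,d^{-1}\}$ then yields the two corresponding edge-orbits. I expect the main obstacle here to be the bookkeeping: one must track the colour coordinate and the two reversals carefully enough to confirm that $\sigma$ preserves the horizontal/vertical type rather than mixing the two orbits, and hence does not collapse them together with $G_1\times G_2$ into a single orbit.

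For the final claim I would compute the stabiliser in $G_1\times G_2$ of the horizontal dart above. The conditions $a^{g_1}=a$, $b^{g_1}=b$ and $x^{g_2}=x$ identify this stabiliser with $(G_1)_{(a,b)}\times(G_2)_x$. Since $G_2$ is dart-transitive and $\Gamma_2$ has valence at least $2$, the vertex-stabiliser $(G_2)_x$ acts transitively on the at least two out-neighbours of $x$ and is therefore non-trivial; hence the dart-stabiliser is non-trivial. The stabiliser of a vertical dart is likewise $(G_1)_a\times(G_2)_{(x,y)}$ and is non-trivial by the same argument applied to $\Gamma_1$, while a reversed dart shares the stabiliser of its forward counterpart; so every dart of $\U\Gamma$ has non-trivial stabiliser in $G_1\times G_2$.
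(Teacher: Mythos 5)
Your proposal is correct and follows essentially the same route as the paper: Lemma~\ref{L2} gives the two colour-class vertex orbits and the two dart orbits of $G_1\times G_2$, the observation that $\sigma$ swaps the colour classes while preserving the horizontal/vertical type yields vertex-transitivity and the two orbits on darts and edges of $\U\Gamma$, and the dart-stabiliser is shown non-trivial by exhibiting a non-trivial element of the vertex-stabiliser $(G_2)_x$ acting on the second coordinate. Your version merely makes explicit the bookkeeping with $H$, $V$, $H^{-1}$, $V^{-1}$ that the paper leaves implicit.
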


\begin{proof}
By Lemma~\ref{L2}, $G_1\times G_2$ is transitive on horizontal and on vertical darts, as well as on white and on black vertices.
On the other hand, $\sigma$ interchanges the color-classes of vertices (while preserving horizontal edges),
 implying that $\la G_1\times G_2, \sigma\ra$ is vertex-transitive
and that it has 2 orbits on the darts and on the edges of $\U\Gamma$.

 Suppose now that $\Gamma_1$ and $\Gamma_2$ are of valence at least $2$.
Let $\bigl((a,x,0), (b,x,1)\bigr)$ be a horizontal dart of $\Gamma$. Since $\Gamma_2$ is dart-transitive and of valence at least $2$,
there is some nontrivial symmetry $g_x \in G_2$ fixing $x\in\V(\Gamma_2)$. Now let
$g$ be the permutation of $\V(\Gamma)$ mapping a vertex $(u,v,i)$ of $\Gamma$ to the vertex $(u,v^{g_x},i)$. 
Then $g$ is clearly a non-trivial symmetry of $\Gamma$ which fixes the horizontal dart $\bigl((a,x,0), (b,x,1)\bigr)$.
By exchanging the roles of $\Gamma_1$ and $\Gamma_2$ one can prove in the same manner that the stabiliser of any vertical dart is non-trivial.
\end{proof}

 If $\Gamma_2$ is isomorphic to either $\Gamma_1$ or to $\Gamma_1^{-1}$, more symmetries of $\SBP(\Gamma_1,\Gamma_2)$
 and its underlying graph arise naturally. The proof of the following lemma is straightforward.
 
 \begin{lemma}
 \label{lem:DD}
 For a digraph $\Delta$, let $\tau$ and $\mu$ be the permutations of $\V(\Delta) \times \V(\Delta) \times \ZZ_2$ defined by
 $$(a,x,i)^\tau = (x,a,1-i) \quad \hbox{and} \quad (a,x,i)^\mu = (x,a,i).$$
 Then $\tau$ is an symmetry of $\SBP(\Delta,\Delta)$ and $\mu$ is a reversal of $\SBP(\Delta,\Delta^{-1})$.
 %
 %
\end{lemma}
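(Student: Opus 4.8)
The plan is to verify both assertions by direct computation, exploiting the single structural feature shared by $\tau$ and $\mu$: each swaps the first two coordinates of a vertex, and this swap interchanges the roles of the two factors of the product, hence interchanges horizontal and vertical darts. The two maps differ only in what they do to the $\ZZ_2$-coordinate, and I expect this difference to account for exactly why one is a symmetry and the other a reversal.

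First I would record that both maps are involutions: $(a,x,i)^{\tau^2}=(x,a,1-i)^\tau=(a,x,i)$ and $(a,x,i)^{\mu^2}=(x,a,i)^\mu=(a,x,i)$, so $\tau^2=\mu^2=\id$. In particular each is a bijection of the relevant vertex set. This lets me shorten the work: to see that $\tau$ is a symmetry it suffices to check that $\tau$ sends every dart to a dart, since applying $\tau$ once more gives the reverse inclusion and hence $\D^\tau=\D$; and to see that $\mu$ is a reversal it suffices to check that for every dart $(p,q)$ the pair $(q^\mu,p^\mu)$ is again a dart, the full isomorphism-onto-the-reverse condition then following from $\mu^2=\id$.

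For $\tau$ on $\SBP(\Delta,\Delta)$, a horizontal dart $((a,x,0),(b,x,1))$ with $(a,b)\in\D(\Delta)$ maps to $((x,a,1),(x,b,0))$, which is a vertical dart because it has constant first coordinate $x$ and second-coordinate pair $(a,b)\in\D(\Delta)$; symmetrically, a vertical dart $((a,x,1),(a,y,0))$ with $(x,y)\in\D(\Delta)$ maps to the horizontal dart $((x,a,0),(y,a,1))$. For $\mu$ on $\SBP(\Delta,\Delta^{-1})$, the vertical darts are the $((a,x,1),(a,y,0))$ with $(x,y)\in\D(\Delta^{-1})$, i.e.\ $(y,x)\in\D(\Delta)$; I would then compute that the horizontal dart $((a,x,0),(b,x,1))$ with $(a,b)\in\D(\Delta)$ gives $(q^\mu,p^\mu)=((x,b,1),(x,a,0))$, a vertical dart precisely because $(b,a)\in\D(\Delta^{-1})$ is equivalent to $(a,b)\in\D(\Delta)$, while the vertical dart $((a,x,1),(a,y,0))$ gives $(q^\mu,p^\mu)=((y,a,0),(x,a,1))$, a horizontal dart because $(y,x)\in\D(\Delta)$.

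The computations are routine; the only point needing care — and the one place where the two statements genuinely diverge — is the bookkeeping of the $\ZZ_2$-coordinate together with which factor supplies which darts. The coordinate swap always trades the horizontal condition (read off the first factor) for the vertical condition (read off the second factor). When $\tau$ also flips $i\mapsto 1-i$ the orientation of each dart is preserved, so one needs the two factors to coincide, which forces the first statement to use $\SBP(\Delta,\Delta)$ and yields a symmetry. When $\mu$ leaves $i$ fixed the orientation of each dart is reversed, and the matching of the vertical condition after the swap forces the second factor to be $\Delta^{-1}$, yielding a reversal of $\SBP(\Delta,\Delta^{-1})$ rather than a symmetry. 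Isolating this governing mechanism is the main (if modest) obstacle; once it is identified, both verifications fall out immediately.
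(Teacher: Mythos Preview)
Your proof is correct: the direct case-by-case verification you carry out is exactly the ``straightforward'' argument the paper leaves to the reader, and your observation that $\tau$ and $\mu$ are involutions neatly halves the work. There is nothing to add.
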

 
 This, together with Lemma~\ref{L2}, yields the following two results:
 	
\begin{corollary}
\label{the:delta2}
Let $\Delta$ be a digraph without sources and sinks admitting a dart-transitive group of symmetries $G$,
let $\Gamma = \SBP(\Delta,\Delta)$, let $G\times G$ act upon $\Gamma$ as in Lemma~\ref{L2}, and let
$\tau$ be as in Lemma~\ref{lem:DD}.
 Then $\langle G\times G, \tau \rangle$ acts transitively on the darts of $\Gamma$ and is isomorphic to the 
wreath product $G\wr C_2 \cong (G\times G) \rtimes C_2$.

Moreover, if $\Delta$ is reversible with a reversal $\tilde{\sigma}$ and $\sigma$ is the reversal of $\Gamma$ 
guaranteed by Lemma~\ref{L2},
acting as
$$
 (a,x,i)^\sigma = (a^{\tilde{\sigma}},x^{\tilde{\sigma}},1-i),
$$
then the group $\langle G\times G, \tau, \sigma \rangle$ acts dart-transitively on the underlying graph $\U\Gamma$.
\end{corollary}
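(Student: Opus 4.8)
The plan is to derive dart-transitivity of $\la G\times G,\tau\ra$ on $\Gamma$ by combining the two-orbit description of $G\times G$ furnished by Lemma~\ref{L2}(ii) with the observation that $\tau$ interchanges those two orbits. First I would compute the action of $\tau$ on a generic horizontal dart $\bigl((a,x,0),(b,x,1)\bigr)$, where $(a,b)\in\D(\Delta)$: applying the formula $(a,x,i)^\tau=(x,a,1-i)$ from Lemma~\ref{lem:DD} sends it to $\bigl((x,a,1),(x,b,0)\bigr)$, which is precisely a vertical dart of $\Gamma$. Since $\tau^2=\id$, it follows that $\tau$ swaps the horizontal and vertical dart-orbits. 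As Lemma~\ref{L2}(ii) guarantees that $G\times G$ is transitive on each orbit separately, $\la G\times G,\tau\ra$ is then transitive on all of $\D(\Gamma)$.

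For the structural claim I would verify the three ingredients of a split extension. The relation $\tau^2=\id$ is immediate from the definition, and $\tau\notin G\times G$ because every element of $G\times G$ fixes the third coordinate while $\tau$ does not, so $\la\tau\ra\cong C_2$ meets $G\times G$ trivially. The essential computation is the conjugation formula: evaluating the image of $(a,x,i)$ under the permutation $\tau(g_1,g_2)\tau$ step by step (apply $\tau$, then $(g_1,g_2)$, then $\tau$, using $\tau=\tau^{-1}$) yields $(a^{g_2},x^{g_1},i)=(a,x,i)^{(g_2,g_1)}$, whence $\tau(g_1,g_2)\tau=(g_2,g_1)$. Thus $\tau$ normalises $G\times G$ and acts on it by interchanging the two direct factors. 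Together with $G\times G\norml\la G\times G,\tau\ra$ and the trivial intersection above, this exhibits $\la G\times G,\tau\ra=(G\times G)\rtimes\la\tau\ra$ with $C_2$ acting by the swap, which is by definition the wreath product $G\wr C_2$.

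For the final assertion I would first record that $\sigma$, being a reversal of $\Gamma$ by Lemma~\ref{L2}, satisfies $\D(\Gamma)^\sigma=\D(\Gamma)^{-1}$ and consequently preserves $\D(\Gamma)\cup\D(\Gamma)^{-1}$; hence $\sigma$ is a genuine symmetry of the underlying graph $\U\Gamma$, whose darts are exactly $\D(\Gamma)\cup\D(\Gamma)^{-1}$. The first part already supplies, for any fixed dart $d_0\in\D(\Gamma)$, an element of $\la G\times G,\tau\ra$ carrying $d_0$ onto any other dart of $\D(\Gamma)$. Since $\sigma$ maps $\D(\Gamma)$ bijectively onto $\D(\Gamma)^{-1}$, every dart of $\U\Gamma$ therefore lies in the single $\la G\times G,\tau,\sigma\ra$-orbit of $d_0$, so $\la G\times G,\tau,\sigma\ra$ is dart-transitive on $\U\Gamma$.

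There is no deep obstacle here: once Lemmas~\ref{L2} and~\ref{lem:DD} are in hand, the argument is essentially bookkeeping about orbits. The step that will require the most care is the direction-sensitive verification that $\tau$ sends horizontal darts to \emph{vertical} darts (and not to reverses of darts, which are not darts of the orientation $\Gamma$), together with the check that $\la\tau\ra\cap(G\times G)=1$; without the latter the extension could collapse and the claimed isomorphism with $G\wr C_2$ would fail.
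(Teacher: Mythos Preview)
Your proposal is correct and follows precisely the approach the paper intends: the paper itself offers no separate proof, merely stating that Lemma~\ref{lem:DD} ``together with Lemma~\ref{L2}, yields'' the corollary, and your write-up is exactly the routine verification of that claim. Your explicit checks that $\tau$ carries horizontal darts to vertical darts, that $\tau$ conjugates $(g_1,g_2)$ to $(g_2,g_1)$, and that $\langle\tau\rangle\cap(G\times G)=1$ are the natural details one fills in.
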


\begin{corollary}
\label{the:SS}
Let $\Delta$ be a digraph without sources and sinks admitting a dart-transitive group of symmetries $G$,
let $\Gamma = \SBP(\Delta,\Delta^{-1})$, let $G\times G$ act upon $\Gamma$ as in Lemma~\ref{L2}, and let
$\mu$ be the reversal of $\Gamma$ guaranteed by Lemma~\ref{lem:DD}.
Then $\langle G\times G, \mu \rangle$ acts edge-transitvely but not vertex-transitively
on the underlying graph $\U\Gamma$.
\end{corollary}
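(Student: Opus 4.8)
The plan is to follow the template of Corollary~\ref{the:delta2}, but to watch carefully the single place where the two statements diverge, namely the action of the extra generator on the colour coordinate. First I would check that Lemma~\ref{L2} applies to $\Gamma=\SBP(\Delta,\Delta^{-1})$ with $G_1=G_2=G$. Since reversal swaps sources with sinks, $\Delta^{-1}$ inherits from $\Delta$ the property of having neither; and since $\Aut(\Delta)=\Aut(\Delta^{-1})$, dart-transitivity of $G$ on $\Delta$ transfers verbatim to dart-transitivity on $\Delta^{-1}$ (given darts $(x,y),(x',y')$ of $\Delta^{-1}$, the element of $G$ carrying $(y,x)$ to $(y',x')$ carries $(x,y)$ to $(x',y')$). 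Lemma~\ref{L2}(i)--(ii) then yields that $G\times G$ has exactly two orbits on vertices, the two colour classes, and exactly two orbits on darts, the horizontal and the vertical ones. Because the separated box product is an orientation (established in Section~\ref{ssec:Con}), the darts of $\Gamma$ are in bijection with the edges of $\U\Gamma$, so $G\times G$ has precisely two orbits on the edges of $\U\Gamma$, the horizontal and the vertical edges.

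Next I would analyse $\mu$, which by Lemma~\ref{lem:DD} is a reversal of $\Gamma$ and hence a symmetry of $\U\Gamma$. The key observation is purely combinatorial: a horizontal edge of $\U\Gamma$ is exactly one whose two endpoints share their \emph{second} coordinate (its endpoints have the shape $(a,x,0),(b,x,1)$ with $(a,b)\in\D(\Delta)$, so in particular $a\neq b$), whereas a vertical edge is exactly one whose endpoints share their \emph{first} coordinate. Since $(a,x,i)^\mu=(x,a,i)$ interchanges the first two coordinates, it carries ``shared second coordinate'' to ``shared first coordinate'', and as a symmetry of $\U\Gamma$ it sends edges to edges; hence it maps the horizontal edge class onto the vertical edge class and vice versa. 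Consequently $\mu$ fuses the two $G\times G$-orbits on edges into one, and $\la G\times G,\mu\ra$ is edge-transitive on $\U\Gamma$.

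Finally, for the failure of vertex-transitivity I would exploit the one feature of $\mu$ that distinguishes this case from Corollary~\ref{the:delta2}: the symmetry $\tau$ there acts by $(a,x,i)^\tau=(x,a,1-i)$ and flips the colour, but here $\mu$ leaves the third coordinate $i$ untouched. Thus $\mu$, like every element of $G\times G$, preserves each colour class setwise, and therefore so does every element of $\la G\times G,\mu\ra$. As both colour classes are nonempty and proper (because $\V(\Delta)\neq\emptyset$), no element of the group can send a white vertex to a black one, so the action has at least two orbits on vertices and is not vertex-transitive. I do not anticipate any serious obstacle; the proof is essentially bookkeeping, and the only step needing real care is the middle one---confirming that $\mu$ genuinely swaps the two edge types rather than fixing them---which I would secure by recording the endpoint-coordinate characterisation of horizontal versus vertical edges before applying $\mu$.
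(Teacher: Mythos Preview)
Your proposal is correct and follows precisely the route the paper intends: the corollary is stated without proof as an immediate consequence of Lemma~\ref{L2} and Lemma~\ref{lem:DD}, and you have simply written out the implicit details---$G\times G$ giving two edge-orbits, the reversal $\mu$ swapping horizontal and vertical edges (hence edge-transitivity), and $\mu$ fixing the colour coordinate (hence non-vertex-transitivity). Your coordinate characterisation of horizontal versus vertical edges is a clean way to verify the swap, and nothing is missing.
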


{\bf Remark.} If $\Delta$ in Corollary~\ref{the:SS} is reversible, then $\SBP(\Delta,\Delta^{-1}) \cong \SBP(\Delta,\Delta)$,
and we may apply Corollary~\ref{the:delta2} to conclude that $\Gamma$ is dart-transitive.
%

\subsection{Connectivity of the separated box product}
\label{ssec:connect}

In order to determine the number and size of connected components of $\SBP(\Gamma_1, \Gamma_2)$, we need to introduce some tools pertaining to digraphs, first introduced in \cite[Section 2]{MarPot} and further developed in \cite{norbert} (see also \cite{norbertamc} for further applications of the technique).
The approach we give here, restricting our attention to orientations, is a slightly simplified form of the notions in \cite{MarPot}.
We point out, however, that all the results mentioned in this subsection (including Theorem~\ref{the:comp})
 apply to general digraphs as well.

 A {\em walk} from a vertex $v_0$ to a vertex $v_k$ in an orientation $\Gamma$ 
is a sequence $W = v_0\ldots v_k$ of vertices $v_i$ such that
for each $i\in \{1,\ldots,k\}$ exactly one of $(v_{i-1},v_i)$ or $(v_i,v_{i-1})$ is a dart of $\Gamma$; in the first case we say that $(v_{i-1},v_i)$ is
{\em positive in $W$} and otherwise it is {\em negative in $W$}. A walk is {\em directed} if all of its darts are positive in it.

The {\em sum} of the walk is the difference between the number of positive and
the number of negative darts in the walk. For two vertices $v$ and $u$ we say that they are {\em alter-related} if there exists a walk from $u$ to $v$ of sum $0$. The relation ``is alter related to'' is an equivalence relation, and which we will denote $\alt$. The number of equivalence classes
of $\alt$ on $\V(\Gamma)$ is called the {\em alter-perimeter} of $\Gamma$.

Is $k$ consecutive darts in a walk $W$ are all positive (negative) in $W$, then we say that the walk they induce in {\em positive}
 ({\em negative}, respectively) in $W$. If a walk of sum $0$ is a concatenation of subwalks, all of the same length $k$ for some $k$,
  that are alternatively positive and negative in $W$, then we say that the walk is in a {\em standard form} with {\em tolerance $k$}.
 Note that, if 
If $v_0\ldots v_m$ is a walk of sum $0$ and $(v_i,u)$ or $(u,v_i)$ is a dart of $\Gamma$, 
then the insertion of the walk $v_iuv_i$ results in walk $v_0\ldots v_i u v_i \ldots v_m$ of sum $0$, implying that after insertion of
 an appropriate set of such walks, one obtains a walk  from $v_0$ to $v_m$ of sum $0$ which is in standard form.
 Hence, we have the following:
 
 \begin{lemma}
 \label{lem:standard}
 If $u$ and $v$ are in the same alter class of an orientation $\Gamma$, then for any sufficiently large integer $k$,
  there exists a walk of sum $0$ from $u$ to $v$ 
 in standard form with tolerance $k$.
 \end{lemma}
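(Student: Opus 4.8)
The plan is to take the sum-$0$ walk supplied by the hypothesis and reshape it into standard form using exactly the back-and-forth insertions $v_iwv_i$ introduced just before the lemma. Since $u$ and $v$ lie in a common alter class, fix a walk $W=v_0\ldots v_m$ of sum $0$ with $v_0=u$, $v_m=v$, and consider its sign sequence $s_1\ldots s_m\in\{+,-\}^m$. Collecting maximal blocks of equal consecutive signs expresses $W$ as a concatenation $R_1R_2\ldots R_t$ of maximal runs, which by maximality already alternate between positive and negative; thus the \emph{only} obstruction to $W$ being in standard form with tolerance $k$ is that the lengths $d_1,\ldots,d_t$ of the runs need not all equal $k$. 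Because the sum is $0$, the total length of the positive runs equals that of the negative runs. Hence the problem reduces to a purely combinatorial one: equalise all $d_j$ to a common value $k$ while keeping the endpoints $u,v$ and the sum $0$ fixed.

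The tool for this is a balanced excursion. Let $R_j$ be a positive run terminating at a vertex $w$. Since the digraphs in question have no sinks (as do all the digraphs relevant in this paper, built by $\SBP$ from digraphs without sources and sinks), there is a directed walk $w\to y_1\to\cdots\to y_a$ of any prescribed length $a$; appending it together with its exact reversal back to $w$ inserts the sign pattern $+^a-^a$, leaves the sum at $0$, and returns the walk to $w$. The inserted $+^a$ merges with $R_j$ and the inserted $-^a$ merges with the following negative run, so a single such excursion lengthens $R_j$ and $R_{j+1}$ each by $a$; the mirror construction using in-darts (available since there are no sources) does the same at a negative run. Each such excursion is nothing but a succession of the elementary insertions $v_iwv_i$ of the preceding paragraph, applied at a run boundary, so the whole reshaping stays within the operations already sanctioned there.

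With this tool one pads the runs to length $k$ from left to right, choosing $a=k-d_j$ at the $j$-th step so that $R_j$ becomes exactly $k$ while the surplus $a$ is carried into $R_{j+1}$. Every excursion returns to its base vertex, so $u$ and $v$ are never moved and the walk always joins $u$ to $v$; moreover each excursion preserves the sum, so the running total stays $0$, and it is this conservation that forces the very last run to end up with exactly the length required for standard form, without any separate treatment of the tail. The crux---and the reason one needs $k$ merely large rather than some specific value---is the bookkeeping that keeps the carrying legitimate: (i) the surplus carried forward must never push a run past the target $k$, which reduces to controlling the partial sums of $W$ and is arranged by taking $k$ above their range and, at places where these partial sums dip, first lifting $W$ by a balanced excursion so that it can be kept on one side of its base level; and (ii) since a sum-$0$ standard form has equally many positive and negative blocks, hence an even number of blocks, when the number $t$ of runs is odd one must interpolate one extra balanced pair of blocks. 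Verifying that (i) and (ii) can be met simultaneously for every sufficiently large $k$ is the principal difficulty; granting it, the final walk is a concatenation of alternating runs all of length $k$, i.e.\ a sum-$0$ walk from $u$ to $v$ in standard form with tolerance $k$, as claimed.
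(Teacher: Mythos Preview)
Your proposal follows the same idea as the paper's one-sentence argument---reshape a sum-$0$ walk into standard form via the back-and-forth insertions $v_iwv_i$---and you go considerably further than the paper in analysing the bookkeeping. Processing the runs left to right as you describe amounts to solving $c_j=k-d_j-c_{j-1}$ (with $c_0=0$) for the successive excursion lengths; one finds $c_{2i}=S_{2i}$ and $c_{2i-1}=k-S_{2i-1}$, where $S_j$ is the partial sum of $W$ after its $j$-th run (taking $R_1$ positive). So the scheme needs every even-indexed $S_{2i}$ to be non-negative, exactly the obstruction you flag in~(i).

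The gap is that your remedy for~(i) does not work. Every balanced insertion has sum $0$, so the partial sum at each original vertex of $W$ is unchanged; in particular the minimum and maximum of the partial-sum profile can only move outward under such insertions. Now the local minima of that profile are precisely the even-indexed run-boundary values when the first run is positive (and the local maxima play that role when the first run is negative). Hence if $W$ ever goes both above and below level $0$, then after any sequence of balanced insertions the modified walk still has a negative local minimum and a positive local maximum, and the carrying scheme fails for either choice of starting sign; ``lifting $W$ by a balanced excursion'' cannot repair this. One correct completion (which the paper does not provide either) is to first split $W$ at its returns to level $0$: each piece then stays weakly on one side of $0$, has an even number of runs (its first and last steps have opposite signs, disposing of~(ii) as well), and your carrying scheme applies to it directly---for a piece staying $\le 0$, apply the scheme to the reversed walk and reverse back, noting that the reverse of a $(+^k-^k)^L$ walk is again of that form. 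The standard-form pieces so obtained all begin with $+^k$ and end with $-^k$, and therefore concatenate into a single standard-form walk from $u$ to $v$.
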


If $\Gamma$ is an  orientation of alter-perimeter $m$ and has neither sources nor sinks, then the $\alt$-equivalence classes can be indexed by $\ZZ_m$ in such
a way that every dart points from a class indexed $i$ to the class indexed $i+1$ for some $i\in \ZZ_m$; in other words,
 the quotient digraph of $\Gamma$ with respect to $\alt$ is a directed cycle of length $m$. (Here we allow $m=1$ and
 view  the cycle as a digraph with one vertex and a loop attached to it.)
 
 \begin{theorem}
 \label{the:comp}
 Let $\Gamma_1$ and $\Gamma_2$ be two connected orientations without sources and sinks
 of alter-perimeters $s$ and $t$, respectively. Then
 $\SBP(\Gamma_1,\Gamma_2)$ consists of $\gcd(s,t)$ connected components.
 \end{theorem}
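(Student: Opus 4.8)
The plan is to understand the connectivity of $\SBP(\Gamma_1,\Gamma_2)$ by tracking, for each vertex, its alter-class in the appropriate factor. Since $\Gamma_1$ has alter-perimeter $s$, its alter-classes are indexed by $\ZZ_s$ so that every dart of $\Gamma_1$ goes from class $i$ to class $i+1$; similarly $\Gamma_2$ has classes indexed by $\ZZ_t$. I would first define, for a vertex $(a,x,i)$ of $\Gamma := \SBP(\Gamma_1,\Gamma_2)$, a pair of labels $(\alpha(a), \beta(x)) \in \ZZ_s \times \ZZ_t$ recording the alter-classes of $a$ in $\Gamma_1$ and $x$ in $\Gamma_2$. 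The key is to find a \emph{combined invariant} in $\ZZ_{\gcd(s,t)}$ that is constant on each connected component of $\U\Gamma$ and takes all $\gcd(s,t)$ values, and then to show conversely that two vertices sharing this invariant are connected.

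\textbf{Step 1: the invariant.} I would examine how the labels $\alpha$ and $\beta$ change along the two dart types. A horizontal dart $((a,x,0),(b,x,1))$ with $(a,b)\in\D_1$ increments $\alpha$ by $1$ and leaves $\beta$ fixed; a vertical dart $((a,x,1),(a,y,0))$ with $(x,y)\in\D_2$ leaves $\alpha$ fixed and increments $\beta$ by $1$. Traversing a dart \emph{backwards} in a walk decrements the relevant coordinate. Hence along any walk in $\U\Gamma$, the quantity $\alpha(a) - \beta(x)$, read modulo $\gcd(s,t)$ (which makes sense since it is well-defined mod $s$ and mod $t$ simultaneously, hence mod $\gcd(s,t)$), changes only when we cross between color classes, and one checks it is in fact invariant under both dart types: a horizontal step changes $\alpha$ by $\pm 1$ and the color bit, a vertical step changes $\beta$ by $\pm 1$ and the color bit. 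I would pin down the correct combination — incorporating the color bit $i$ — so that the resulting function $\Phi\colon \V(\Gamma)\to\ZZ_{\gcd(s,t)}$ is genuinely constant along every dart and its reverse, and therefore constant on each component. Since $\alpha,\beta$ surject onto $\ZZ_s,\ZZ_t$, the Chinese-remainder-type image of $\alpha-\beta$ surjects onto $\ZZ_{\gcd(s,t)}$, so $\Phi$ takes all $\gcd(s,t)$ values, giving at least $\gcd(s,t)$ components.

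\textbf{Step 2: the converse.} Here I would use Lemma~\ref{lem:standard} to show that any two vertices with the same value of $\Phi$ lie in a common component. Given two such vertices, I would first move within their own alter-classes — using connectedness of $\Gamma_1$ and $\Gamma_2$ and the sum-zero walks guaranteed by the lemma — to reduce to comparing the label pairs. A walk of sum $0$ in $\Gamma_1$ lifts to a horizontal round-trip in $\Gamma$ that returns to the same $\alpha$-label but can reposition the $\Gamma_1$-coordinate arbitrarily within its class; similarly for $\Gamma_2$ via vertical walks. The remaining task is to realise the prescribed shift in $(\alpha,\beta)$: since $\Phi$ agrees, the difference $(\Delta\alpha,\Delta\beta)$ satisfies $\Delta\alpha\equiv\Delta\beta \pmod{\gcd(s,t)}$, and by a Bézout argument I can write this common residue using multiples of the full cycles of lengths $s$ and $t$ — that is, by concatenating directed horizontal loops (which advance $\alpha$ by $s$, i.e.\ $0$) and vertical loops to synchronise the two coordinates.

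\textbf{The main obstacle} I anticipate is Step 2: carefully lifting sum-zero walks in the factors to walks in $\SBP(\Gamma_1,\Gamma_2)$ while controlling the color bit. Because a horizontal dart forces the color from white to black and a vertical dart forces it back, one cannot freely interleave $\Gamma_1$- and $\Gamma_2$-moves; the moves must alternate in color, so synchronising the two coordinate-shifts requires a walk in \emph{standard form} with a common tolerance $k$ in both factors. This is precisely where Lemma~\ref{lem:standard} is essential, and reconciling the two tolerances so that the horizontal and vertical excursions splice together into a single connected walk is the delicate bookkeeping I expect to occupy most of the argument.
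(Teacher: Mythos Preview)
Your overall strategy is exactly the paper's: label vertices by alter-classes $(\alpha,\beta)\in\ZZ_s\times\ZZ_t$, show that the coset of $\langle(1,1)\rangle$ (equivalently, your $\Phi=\alpha-\beta-i\bmod\gcd(s,t)$) is a connectivity invariant, and then use Lemma~\ref{lem:standard} for the converse. Step~1 is fine; the colour-bit correction you anticipate is precisely $\Phi(a,x,i)=\alpha(a)-\beta(x)-i$.

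Step~2, however, contains two slips that would derail the argument if taken literally. First, a sum-$0$ walk in $\Gamma_1$ does \emph{not} in general lift to a ``horizontal round-trip'' in $\Gamma$: horizontal darts go white$\to$black, so a purely horizontal walk in $\U\Gamma$ must alternate forward/backward at every step, i.e.\ it only realises tolerance-$1$ standard-form walks in $\Gamma_1$, and Lemma~\ref{lem:standard} does not guarantee tolerance $1$. Second, ``directed horizontal loops'' do not exist for the same colour-bit reason, so the B\'ezout picture is the wrong mechanism for the label shift. The paper's execution avoids both problems by reversing your order: given a target white vertex with label $(i,i)\in\langle(1,1)\rangle$, it first takes a \emph{directed} walk of length $2i$ in $\Gamma$ (alternating one horizontal and one vertical forward dart) to reach \emph{some} white vertex $(b',y',0)$ with the correct label, and only then adjusts the coordinates within their alter-classes. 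That second move is exactly the obstacle you flag: one takes sum-$0$ walks $b'\to b$ in $\Gamma_1$ and $y'\to y$ in $\Gamma_2$, both in standard form with a \emph{common} tolerance $k$, and interleaves them dart-by-dart (one horizontal step, one vertical step, repeated $k$ times with the current sign, then flip sign) to obtain a single walk in $\Gamma$ from $(b',y',0)$ to $(b,y,0)$. So your diagnosis of the main difficulty is correct; you just need to route Step~2 through the directed length-$2i$ walk rather than through horizontal loops and B\'ezout.
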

 

\begin{proof}
Let $\Gamma=\SBP(\Gamma_1,\Gamma_2)$ and fix a white vertex $(a,x,0)$ of $\Gamma$. Label the alter-classes of $\Gamma_1$ and
$\Gamma_2$ by $\ZZ_s$ and $\ZZ_t$ 
 in such a way that $a$ and $x$ are in the classes labelled $0$ and
 every dart with its initial vertex in class $i$ has its terminal vertex in class $i+1$.
 Further, 
for each $b\in \V(\Gamma_1)$ from the alter-class labelled $i\in \ZZ_s$ and $y\in \V(\Gamma_2)$ from the alter-class labelled $j\in\ZZ_t$,
label the two vertices $(b,y,\epsilon)\in\V(\Gamma)$, $\epsilon\in \ZZ_2$, by $(i,j)\in \ZZ_s\times \ZZ_t$. Now observe that a horizontal dart of 
$\Gamma$ points from a white vertex with label $(i,j)$ to a black vertex with label $(i+1,j)$ for some $(i,j)\in \ZZ_s\times\ZZ_t$, while a vertical dart points from a black vertex with label $(i,j)$ to a white vertex with label $(i,j+1)$. Therefore, a walk of length 2 
starting in a white vertex with label $(i,j)$
ends in a vertex with label $(i,j)$ if it consists of two vertical or of two horizontal darts (and the sum of the walk is then $0$), 
ends in a vertex with label $(i+1,j+1)$ if it first traverses a horizontal and then a vertical dart (and the sum of the walk is then $2$), and
ends in a vertex with label $(i-1,j-1)$ if it first traverses a vertical and then a horizontal dart (and the sum of the walk is then $-2$).
 From this, the following can be deduced immediately:

{\em A walk between two white vertices in $\Gamma$ has sum $2i$ for some integer $i$
and the labels of the two white vertices then differ by  $(i,i)\in\ZZ_s\times\ZZ_t$.}
(Here the element $(i,i)$ should be interpreted as $(i \hbox{ mod } s, i \hbox{ mod } t)$.)

From this, it follows that the connected component of $\Gamma$ that contains the vertex $(a,x,0)$ can contain only those white
vertices that have labels within the subgroup $\langle(1,1)\rangle$ generated by the element $(1,1) \in \ZZ_s\times\ZZ_t$.
On the other hand, since
every directed walk of length $2i$ starting in a white vertex with label $(0,0)$ will end in a white vertex with label 
$(i,i)$, it follows that:

{\em For every element $(i,i)\in \langle(1,1)\rangle$, there is 
at least one white vertex in the component of $(a,x,0)$ with label $(i,i)$}.

Let us now show that in fact all white vertices with label contained in $\langle(1,1)\rangle$ are in the same component as $(a,x,0)$.
Take a white vertex $(b,y,0)$ with label $(i,i) \in \langle(1,1)\rangle$ and let $(b',y',0)$ be the final vertex of a directed  walk of length
$2i$ starting in 
$(a,x,0)$.
 Observe that the label of $(b',y',0)$ is then $(i,i)$, implying that $b$ and $b'$ ($y$ and $y'$)  
are two vertices of $\Gamma_1$ ($\Gamma_2$)
 that belong to the same alter-class of $\Gamma_1$ ($\Gamma_2$, respectively).
 In particular, for some integer $k$, 
 there is a walk $W_1=b_0,b_1,\ldots,b_{2\ell k}$ of sum $0$ in $\Gamma_1$ with $b_0=b'$ and $b_{2\ell k}=b$,  
 and a walk $W_2=y_0,y_1,\ldots,y_{2\ell k}$ of sum $0$ in $\Gamma_2$ with $y_0=y'$ and $y_{2\ell k}=y$,
 both these walk being in the standard form and tolerance $k$.

We shall now construct a walk from $(a,x,0)$ to $(b,y,0)$ in $\Gamma$ by alternatingly using darts from $W_1$ and $W_2$;
that is, for $r\in \ZZ_\ell$ and $i\in\ZZ_k$, let
\begin{eqnarray*}
 U_{r,i} & = & (b_{2rk+i},y_{2rk+i},0)\,  (b_{2rk+i+1},y_{2rk+i},1),\\
  U_{r,i}^* & = & (b_{(2r+1)k+i},y_{(2r+1)k+1+i},1)\, (b_{(2r+1)k+1+i},y_{(2r+1)k+1+i},0),
\end{eqnarray*}
and observe that the concatenation
$$
W_r= U_{r,0}\, U_{r,1}\, \ldots \, U_{r,k-1}\,\,   U_{r,0}^*\, U_{r,1}^* \, \ldots \, U_{r,k-1}^*.
$$
is a walk in $\Gamma$ of sum $0$ from $(b_{2rk},y_{2rk},0)$ to $(b_{2(r+1)k},y_{2(r+1)k},0)$.
Finally, by concatenating the walks $W_r$ 
(where the end-vertex of $W_{r-1}$ is identified with the start-vertex of $W_r$ for every $r\in \{1,\ldots, \ell-1$),
one obtains a walk
$
W_0\, W_1\, \ldots, W_{\ell-1}
$
in $\Gamma$ of sum $0$ from $(b_{0},y_{0},0) = (b',y',0)$ to $(b_{2\ell k}, y_{2\ell k},0) = (b,y,0)$. 
%
%
In particular, $(b,y,0)$ is in the same connected component of $\Gamma$ as $(a,x,0)$.
We have thus shown that the set of white vertices that are in the same component of $\Gamma$ as a white vertex with label $(0,0)$
is precisely the set of white vertices with label within the group $\langle (1,1)\rangle \le \ZZ_s\times\ZZ_t$.

If we chose the labelling of the alter-classes of $\Gamma_1$ and $\Gamma_2$ in such a way that $a$ and $x$ had label $\alpha\in \ZZ_s$
and $\beta\in \ZZ_t$ (rather than $0$ and $0$), the label of a vertex $(b,y,0)$ that had label $(i,j)$ in the original labelling then receives
the label $(i+\alpha,j+\beta)$. By the result that we proved above, it follows that two white vertices belong to the same component of $\Gamma$ 
if and only if their label differ by an element of the group $\langle (1,1)\rangle \le \ZZ_s\times\ZZ_t$. This implies that the number of connected
component of $\Gamma$ equals the number of cosets of $\langle (1,1)\rangle$ in $\ZZ_s\times\ZZ_t$, which is clearly $\gcd(s,t)$.
\end{proof}

\section{Relationship with $\AAG(\Lambda)$}
\label{sec:AAG}

 Next, we wish to explain the motivating construction $\AAG(\Lambda)$, mentioned in Section~\ref{sec:intro}, in terms of the separated box product. 

  Let us first define the directed version of the $\AAG$ construction: For a graph $\Lambda$, let $\dAAG(\Lambda)$ be the digraph
  with vertex-set $\D(\Lambda)\times\D(\Lambda)$ and with a pair $\bigl( (x,y),(z,w) \bigr)$, $x,y,z,w\in \D(\Lambda)$, forming a dart of
  $\dAAG(\Lambda)$ if and only if $y=z$ and $(x,w)$ is a $2$-dart of $\Lambda$. Note that the graph $\AAG(\Lambda)$ is then
  just the underlying graph of $\dAAG(\Lambda)$.
 
  Following \cite{HC}, for a digraph $\Gamma$, we let the {\em dart digraph of $\Gamma$} (denoted $\Dart\Gamma$) be the
  the digraph with vertices and darts being the darts and $2$-darts of $\Gamma$, respectively. Note that the dart digraph is always
  reversible with a reversal which maps a dart $(u,v)$ to its inverse dart $(v,u)$.
  
  The {\em canonical double cover} of a digraph $\Gamma$ is the digraph $\CDC(\Gamma)$ with vertex-set being $\V(\Gamma) \times \ZZ_2$
  and dart-set $\{ ((u,i), (v,1-i)) : (u,v) \in \D(\Gamma), i\in \ZZ_2\}$. Note that for a connected digraph $\Gamma$,
  the digraph $\CDC(\Gamma)$ has two or one connected components, depending on whether
  $\Gamma$ is  bipartite or not. In the case $\Gamma$ is bipartite, it is isomorphic to each of the connected components of $\CDC(\Gamma)$.
  
Recall that $\Gamma_1\#\Gamma_2$ denotes a connected component of  the underlying graph of $\SBP(\Gamma_1,\Gamma_2)$.
  We can now state the result explaining the relationship between the $\AAG$-construction and the separated box product:
  
  \begin{lemma}
  \label{lem:iso}
  If $\Lambda$ is a connected cubic graph, then $\AAG(\Lambda) \cong \Dart\Lambda \# \Dart\Lambda$, and 
  moreover, $\CDC(\dAAG(\Lambda))\cong \SBP(\Dart\Lambda,\Dart\Lambda)$.
%
   %
  \end{lemma}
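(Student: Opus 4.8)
The plan is to prove the two isomorphisms in turn, starting with the second one, since it is the more structural statement and the first will follow from it via the canonical double cover. To set up notation, recall that $\Dart\Lambda$ has vertex-set $\D(\Lambda)$ and its darts are the $2$-darts of $\Lambda$, that is, pairs $(x,y)$ of darts of $\Lambda$ with the terminal vertex of $x$ equal to the initial vertex of $y$ and the initial vertex of $x$ distinct from the terminal vertex of $y$. Writing $\Theta = \Dart\Lambda$, the vertices of $\SBP(\Theta,\Theta)$ are triples $(x, z, i)$ with $x,z \in \D(\Lambda)$ and $i \in \ZZ_2$. The darts of $\SBP(\Theta,\Theta)$ are, by definition, the horizontal darts $\bigl((x,z,0),(x',z,1)\bigr)$ with $(x,x')$ a $2$-dart of $\Lambda$ (i.e.\ a dart of $\Theta$), together with the vertical darts $\bigl((x,z,1),(x,z',0)\bigr)$ with $(z,z')$ a dart of $\Theta$. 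On the other side, $\CDC(\dAAG(\Lambda))$ has vertex-set $\bigl(\D(\Lambda)\times\D(\Lambda)\bigr)\times\ZZ_2$ and darts $\bigl(((x,y),i),((z,w),1-i)\bigr)$ where $\bigl((x,y),(z,w)\bigr)$ is a dart of $\dAAG(\Lambda)$, meaning $y = z$ and $(x,w)$ is a $2$-dart of $\Lambda$.

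First I would write down the candidate bijection between the two vertex-sets and check that it respects the dart relation. The natural guess is to send a vertex $((x,y),i)$ of $\CDC(\dAAG(\Lambda))$ to a triple in $\V(\SBP(\Theta,\Theta))$ built from the two darts $x,y$ of $\Lambda$ and the $\ZZ_2$-coordinate $i$. The key observation to exploit is that a dart of $\dAAG(\Lambda)$ glues $(x,y)$ to $(z,w)$ precisely along the shared middle coordinate $y = z$, and that $(x,w)$ must be a $2$-dart; in the $\CDC$ the colour $i$ flips along each dart. I expect the correct map to be something like $((x,y),i) \mapsto (x, y, i)$ up to reordering or reversing one of the coordinates, and the main task is to verify that under this identification a $\CDC$-dart with $i = 0$ becomes a horizontal dart of $\SBP(\Theta,\Theta)$ and a $\CDC$-dart with $i=1$ becomes a vertical dart. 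Concretely, a $\CDC$-dart from $((x,y),0)$ to $((z,w),1)$ forces $y = z$ and $(x,w)$ a $2$-dart, and I must check this matches the horizontal-dart condition (one coordinate fixed, a $\Theta$-dart in the other); the $i=1$ case should fall out symmetrically as a vertical dart. The bookkeeping of which coordinate stays fixed and which carries the $2$-dart is where care is needed, and matching it against the asymmetric roles of the two coordinates in the $\SBP$ definition is the crux of this half.

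Having established $\CDC(\dAAG(\Lambda)) \cong \SBP(\Theta,\Theta)$, I would then deduce the first isomorphism using the remarks already recorded in the excerpt about the canonical double cover. The graph $\AAG(\Lambda)$ is by definition the underlying graph of $\dAAG(\Lambda)$, and the excerpt notes that for a connected bipartite digraph the $\CDC$ splits into two components each isomorphic to the original. So the plan is to argue that $\dAAG(\Lambda)$ (or its underlying graph) is bipartite, which should follow from the natural $2$-colouring of $\D(\Lambda)$ coming from the bipartition of the incidence structure or from the parity built into the $2$-dart relation, so that one connected component of $\CDC(\dAAG(\Lambda))$ is isomorphic to $\AAG(\Lambda)$. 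Since $\Gamma_1 \# \Gamma_2$ was defined precisely as a single connected component of the underlying graph of $\SBP(\Gamma_1,\Gamma_2)$, combining the component statement for $\CDC$ with the isomorphism of the previous step yields $\AAG(\Lambda)\cong \Theta\#\Theta = \Dart\Lambda\#\Dart\Lambda$. Here cubicity of $\Lambda$ is used to ensure $\Theta=\Dart\Lambda$ has no sources or sinks (every dart of a cubic graph extends to a $2$-dart in two ways), so the machinery of Section~\ref{ssec:connect} and Lemma~\ref{L2} applies and the components are genuinely all isomorphic.

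The main obstacle I anticipate is not any deep idea but the correct orchestration of the three $\ZZ_2$-coordinates and the reversal bookkeeping: the $\CDC$ contributes one $\ZZ_2$ via its colour, the $\SBP$ contributes one $\ZZ_2$ via its white/black split, and the two coordinates of $\dAAG$ play asymmetric roles exactly as the two factors of the (non-commutative) separated box product do. Pinning down the one correct bijection among the handful of plausible coordinate permutations, and checking that it carries the $\dAAG$ plus $\CDC$ dart condition onto the horizontal/vertical dart condition rather than onto its reverse, is where a sign or index error would most easily creep in. I would therefore fix an explicit direction convention for $\Theta$-darts at the outset and carry it consistently through both verifications.
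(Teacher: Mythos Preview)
Your approach is correct and matches the paper's exactly: the paper writes down the explicit bijection $\varphi((x,y),0)=(x,y,0)$, $\varphi((x,y),1)=(y,x,1)$ (the coordinate swap on the $i=1$ layer that you anticipated), leaves the dart-preservation check as a straightforward calculation, and then deduces the first isomorphism from the second via the fact that a connected bipartite digraph is isomorphic to each component of its canonical double cover. Your identification of the coordinate bookkeeping as the only delicate point is spot on.
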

    
  \begin{proof}
  Consider the mapping $\varphi \colon \V(\CDC(\dAAG(\Lambda))) \to \V(\SBP(\Dart\Lambda,\Dart\Lambda))$ 
  defined by
  $$
  \varphi( (x,y), 0) = (x,y,0) \>\> \hbox{ and } \>\> \varphi( (x,y), 1) = (y,x,1)
  $$
  for every pair of darts $x,y \in \D(\Lambda)$.
  It is a matter of straightforward calculation to show that $\varphi$ is an isomorphism between 
  the two digraphs.
  
    Since $\dAAG(\Lambda)$ is connected, it is isomorphic to a connected component of its canonical double cover,
   and therefore isomorphic to a connected component of $\SBP(\Dart\Lambda,\Dart\Lambda)$;
   in particular,  $\AAG(\Lambda) \cong \U\dAAG(\Lambda) \cong \Dart\Lambda \# \Dart\Lambda$, as claimed.
\end{proof}

  Lemma~\ref{lem:iso} allows us to use the information about the symmetries of the separated box product
  to explain the unexpectedly large symmetry group of the graph $\AAG(\Lambda)$.

  Assume henceforth that $\Lambda$ is a connected cubic graph and that $G\le \Aut(\Lambda)$ acts transitively on 
  the $2$-darts of $\Lambda$. Let $\Delta = \Dart\Lambda$ and let $\Gamma=\SBP(\Delta,\Delta)$.
 Then $\Delta$ is a connected $2$-valent digraph with $G$ acting dart-transitively on $\Delta$ as a group of symmetries.
 
 Let $G\times G$ and $\tau$ act upon $\V(\Gamma)$ as in Corollary~\ref{the:delta2}; that is, let
  $$
   (x,y,i)^{(g,h)} = (x^g, y^h,i) \quad \hbox{and} \quad    (x,y,i)^\tau = (y, x,1-i)
  $$
  for every pair $x,y \in \D(\Lambda)$ and $i\in \ZZ_2$. Then, by Corollary~\ref{the:delta2},
  the group
   $$
    Y=\la G\times G, \tau \ra \cong G\wr C_2
   $$
   acts on $\Gamma$ as a dart-transitive group of symmetries.

  Note that $\Delta$ admits a reversal $\iota$ mapping a dart 
 $x=(u,v)$ of $\Lambda$ to its inverse dart $x^{-1}=(v,u)$.
 By Corollary~\ref{the:delta2}, the reversal $\iota$ of $\Delta$ gives rise to the
  reversal $\sigma$ of $\Gamma$ satisfying the formula
  $$
  (x,y,i)^\sigma = (x^{-1},y^{-1},1-i),
  $$
  and the group
  $$
   X=\la G\times G, \tau, \sigma \ra
  $$
  acts on the underlying graph $\U\Gamma$  dart-transitively.
  Since $\sigma$  commutes with $\tau$ and with each element of $G\times G$, the group 
  $X$ is isomorphic to $\langle G\times G, \tau\rangle \times \langle \sigma \rangle \cong (G\wr C_2) \times C_2$.
 Since, by Lemma~\ref{lem:iso}, $\CDC(\dAAG(\Lambda)) \cong \Gamma$, 
 the groups $Y$ and $X$ may be viewed as acting  dart-transitively on $\CDC(\dAAG(\Lambda))$ and $\CDC(\AAG(\Lambda))$,
 respectively.

 Let us now move our attention to the symmetries of $\dAAG(\Lambda)$ and $\AAG(\Lambda)$. 
 Observe first that dart-transitivity
 of the canonical double cover does not necessarily imply dart-tran\-si\-ti\-vi\-ty of the base graph,
 as, for example, observed in \cite{MarUnst} and further analysed in \cite{Surow,WilUnst}. This 
 phenomenon is caused by existence of {\em unexpected symmetries} of the canonical double cover
 that do not respect the partition of the vertices of the cover into fibres.
 

   As it happens, the digraph $\CDC(\dAAG(\Lambda))$, as well as its underlying graph $\CDC(\AAG(\Lambda))$,
   {\em always} possesses unexpected symmetries: 
    For $g\in \Aut(\Lambda)$, the mapping which maps a vertex $((x,y),0)$ of $\CDC(\dAAG(\Lambda))$ to
   $((x^g,y),0)$ and a vertex $((x,y),1)$ to $((x,y^g),1)$ is an unexpected symmetry of $\CDC(\dAAG(\Lambda))$.
Considering the isomorphism of Lemma~\ref{lem:iso}, this is equivalent to the symmetry$(g, I)$ of $\SBP(\Dart\Lambda,\Dart\Lambda)$, where $I$ stands for the trivial symmetry. 
  
  Due to these unexpected symmetries, one cannot conclude that $\dAAG(\Lambda)$ is dart-transitive.
   In fact,  in the case when $\dAAG(\Lambda)$ (and consequently $\Lambda$)
   is not bipartite (that is, when $\CDC(\dAAG(\Lambda))$ is connected), the digraph $\dAAG(\Lambda)$ and its
   underlying graph $\AAG(\Lambda)$ are indeed not dart-transitive.
   
   However, when $\Lambda$ (and consequently $\dAAG(\Lambda)$) is bipartite, then 
   $\CDC(\dAAG(\Lambda))$ decomposes into two connected components, say $\Omega$ and $\Omega'$,
   each isomorphic to $\dAAG(\Lambda)$.
   Then the set-wise stabiliser $Y_\Omega$ of $\Omega$ acts dart-transitively on $\Omega\cong \dAAG(\Lambda)$
   and the set-wise stabiliser $X_\Omega$ acts dart-transitively on the underlying graph $\U\Omega\cong \AAG(\Lambda)$.
    Hence the following result.
   
   \begin{lemma}
   If $\Lambda$ is a connected bipartite cubic $2$-dart-transitive graph, then
   $\dAAG(\Lambda)$ is a connected $2$-valent reversible dart-transitive digraph and its underlying graph
   $\AAG(\Lambda)$ is a connected tetravalent dart-transitive graph.
   \end{lemma}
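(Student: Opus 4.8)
The plan is to read off most of the assertions from the apparatus already in place, isolating connectedness as the one point that needs real work. Throughout I set $\Delta=\Dart\Lambda$ and $\Gamma=\SBP(\Delta,\Delta)$, and I will freely use that $\Delta$ is a connected $2$-valent digraph carrying the dart-transitive group $G$, that $\Gamma\cong\CDC(\dAAG(\Lambda))$ by Lemma~\ref{lem:iso}, and that $Y=\la G\times G,\tau\ra$ and $X=\la G\times G,\tau,\sigma\ra$ act dart-transitively on $\Gamma$ and on $\U\Gamma$ respectively, as in Corollary~\ref{the:delta2} and the discussion after it.

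I would first dispose of the valence and bipartiteness. Since both factors of $\Gamma$ are $2$-valent, so is $\Gamma$ and hence so is any component; as $\dAAG(\Lambda)$ is isomorphic to a component of $\Gamma$, it is $2$-valent and $\AAG(\Lambda)=\U\dAAG(\Lambda)$ is tetravalent. (One can also see this directly: from $(x,y)$ with $x=(a,b)$ the out-neighbours are exactly the two pairs $(y,(b,c))$ with $c\sim b$, $c\neq a$.) For bipartiteness, colouring $(x,y)$ by the part of $\Lambda$ containing the initial vertex of $x$ is proper, since along each dart of $\dAAG(\Lambda)$ the first coordinate moves one edge along $\Lambda$ and so changes colour.

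The crux is connectedness, which I would extract from Theorem~\ref{the:comp}, the whole point being to show that the alter-perimeter $s$ of $\Delta$ equals $2$. A witnessing labelling is a map $\phi\colon\D(\Lambda)\to\ZZ_s$ with $\phi((v,w))=\phi((u,v))+1$ whenever $u,w$ are distinct neighbours of $v$. Because $\Lambda$ is cubic, each $v$ has at least two neighbours distinct from any fixed one, and varying them forces $\phi((u,v))$ to be independent of $u$; writing the common value as $f(v)$ yields $f(w)=f(v)+1$ along every edge $vw$, and the same relation read along $wv$ gives $f(v)=f(w)+1$, so $2\equiv 0$ in $\ZZ_s$ and $s\mid 2$. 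Bipartiteness of $\Lambda$ rules out $s=1$ via the $2$-colouring, so $s=2$. Then Theorem~\ref{the:comp} gives $\gcd(s,s)=2$ components of $\Gamma$; since $\Gamma\cong\CDC(\dAAG(\Lambda))$ and the base $\dAAG(\Lambda)$ is bipartite, its canonical double cover has exactly twice as many components as itself, forcing $\dAAG(\Lambda)$ --- and hence $\AAG(\Lambda)$ --- to be connected. This alter-perimeter computation, where the cubic hypothesis is used essentially to make $\phi$ descend to a function on $\V(\Lambda)$, is the step I expect to be the main obstacle.

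It remains to transport dart-transitivity and reversibility to a single component. Being connected and bipartite, $\dAAG(\Lambda)$ has $\CDC(\dAAG(\Lambda))\cong\Gamma$ splitting into exactly two components $\Omega\cong\Omega'\cong\dAAG(\Lambda)$, which constitute a block system of size two for both $Y$ and $X$. As $Y$ is transitive on all darts of $\Gamma$, the standard block argument makes the setwise stabiliser $Y_\Omega$ transitive on the darts of $\Omega\cong\dAAG(\Lambda)$, and similarly $X_\Omega$ is transitive on the darts of $\U\Omega\cong\AAG(\Lambda)$; this gives the two dart-transitivity claims. For reversibility I would note that every element of $X$ is a symmetry or a reversal of $\Gamma$, hence restricts on the invariant orientation $\Omega$ to a symmetry or a reversal of $\Omega$; since $X_\Omega$, being transitive on the darts of the graph $\U\Omega$, carries some dart $d\in\D(\Omega)$ to $d^{-1}\notin\D(\Omega)$, the element realising this must be a reversal of $\Omega$, so $\dAAG(\Lambda)$ is reversible.
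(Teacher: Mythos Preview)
Your overall architecture matches the paper's: the lemma is stated as a consequence of the discussion preceding it, and that discussion proceeds exactly as you do, identifying $\Gamma=\SBP(\Delta,\Delta)\cong\CDC(\dAAG(\Lambda))$, splitting it into two components $\Omega,\Omega'$ (since $\dAAG(\Lambda)$ is bipartite), and reading off dart-transitivity of $\dAAG(\Lambda)$ and $\AAG(\Lambda)$ from the setwise stabilisers $Y_\Omega$ and $X_\Omega$. Your reversibility argument via restricting an element of $X_\Omega$ that flips a dart is correct and a bit more explicit than the paper, which simply has the reversal available from the general machinery.

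Your connectedness argument via the alter-perimeter of $\Delta=\Dart\Lambda$ is a genuine addition: the paper simply asserts that $\dAAG(\Lambda)$ is connected (already in the proof of Lemma~\ref{lem:iso}) without justification, whereas you deduce it cleanly from Theorem~\ref{the:comp}. The computation that the alter-perimeter is $2$ is correct, and the use of cubicity to force $\phi((u,v))$ to depend only on $v$ is the right observation.

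There is, however, a real error in your bipartiteness step. You propose colouring $(x,y)$ by the bipartition class of the initial vertex of $x$, claiming that ``along each dart of $\dAAG(\Lambda)$ the first coordinate moves one edge along $\Lambda$''. But a dart of $\dAAG(\Lambda)$ goes from $(x,y)$ to $(y,w)$; the first coordinate passes from $x$ to $y$, and these are \emph{unrelated} darts of $\Lambda$ --- your own out-neighbour description shows this, since only $w$ is constrained by $x$. Concretely, in $K_{3,3}$ with parts $\{0,1,2\}$ and $\{3,4,5\}$, taking $x=(0,3)$ and $y=(1,4)$ gives a vertex $(x,y)$ and an out-neighbour $(y,(3,1))$ both coloured by part $\{0,1,2\}$. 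The correct $2$-colouring, given a few paragraphs later in the paper, colours $(x,y)$ according to whether the initial vertices of $x$ and $y$ lie in the \emph{same} bipartition class of $\Lambda$ or not; along a dart $(x,y)\to(y,w)$ with $x=(a,b)$ and $w=(b,c)$ this colour flips because $a$ and $b$ lie in opposite classes. Once you substitute this colouring, the rest of your argument (that bipartiteness of $\dAAG(\Lambda)$ forces $\CDC(\dAAG(\Lambda))$ to have exactly twice as many components, hence $\dAAG(\Lambda)$ is connected) goes through unchanged.
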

   
   Let us analyse the groups $Y_\Omega$ and $X_{\Omega}$ in more detail.
   Assume for the rest of this section that $\Lambda$ is a connected bipartite cubic graph
   admitting a $2$-dart-transitive group of symmetries $G$,
   and let $\sigma$, $\tau$,  $Y$ and $X$ be as above. In view of the isomorphism given in the proof of Lemma~\ref{lem:iso},
   $G\times G$, $\sigma$ and $\tau$ may be viewed to act upon the vertices of $\CDC(\dAAG(\Lambda))$ according to:
   $$
   ((x,y),0)^{(g_1,g_2)} =    ((x^{g_1},y^{g_2}),0), \>\>
    ((x,y),1)^{(g_1,g_2)} =    ((x^{g_2},y^{g_1}),1) \> \hbox{ for }\> g_1,g_2\in G,
   $$
   $$
   ((x,y),i)^\sigma  = ((y^{-1},x^{-1}),1-i), \quad ((x,y),i)^\tau = ((x,y),1-i).
   $$
   
   Colour the vertices of $\Lambda$ properly black and white and let 
   a dart of $\Lambda$ inherit the colour of its initial vertex;
   this then determines a proper colouring of the vertices of $\Dart\Lambda$. Further, colour
   a vertex $(x,y)$ of $\dAAG(\Lambda)$ {\em blue} if the darts $x,y$ of $\Lambda$ are of the same colour (either black or white),
   and {\em red} otherwise;  this is then a proper colouring of the vertices of $\dAAG(\Lambda)$.
   Thus, $\CDC(\dAAG(\Lambda))$ has two connected components, one of them containing all the vertices
   $((x,y),0)$ with $(x,y)$ blue and all the vertices $((x,y),1)$ with $(x,y)$ red; let us denote this component by $\Omega$,
    and the other by $\Omega'$.
   Note that the mapping $\psi$ from $\dAAG(\Lambda)$ to $\Omega$ mapping a blue vertex $(x,y)$ to $((x,y),0)$ and
   a red vertex $(x,y)$ to $((x,y),1)$ gives rise to an isomorphism between these two digraphs. 
   
  Let $H$ be the index-$2$ subgroup of $G$ preserving the set of black vertices  $\Lambda$. 
  Observe that $H\times H$, in its action on $\CDC(\dAAG(\Lambda))$, preserves $\Omega$,
   and thus $H\times H \le Y_\Omega$.   
   On the other hand, the element $\tau$ maps $\Omega$ to $\Omega'$, implying that $\tau \in Y\setminus Y_\Omega$.
   
  Further, take $\alpha \in G\setminus H$, and observe that the elements $(\alpha,1)$ and $(1,\alpha)$ of $G\times G$, 
  in their action on $\CDC(\dAAG(\Lambda))$, switch $\Omega$ and $\Omega'$, implying that $(\alpha,\alpha), (1,\alpha)\tau$ and
  $(\alpha,1)\tau$ belong to $Y_\Omega$.
  
   Finally, observe that the reversal $\sigma$ interchanges $\Omega$ and $\Omega'$, implying that $\tau\sigma \in X_\Omega$.
   Now consider the groups
   $$
    A =   \langle H\times H, (\alpha,\alpha), (1,\alpha)\tau \rangle \quad \hbox{ and } \quad
    B =   \langle A, \tau\sigma \rangle,
   $$
   and observe that $\la A, \tau\ra = \la G\times G, \tau\ra = Y$.
   By the preceding discussion, we see that $A\le Y_\Omega$ and $B\le X_\Omega$. 
   Observe also that $H\times H$ is normal and has index $4$ in $A$,  implying that 
   $|A| = 4 |H\times H| = |G\times G| = |Y_\Omega|$, and showing that $A=Y_\Omega$. 
   Similarly, since $|X:Y| = |A:B|=2$, 
   implying that $B=X_\Omega$. Finally, since $|X:X_\Omega| =2$ we see that $X=\langle B,\tau\rangle$.
   Before proceeding,  let us prove the following elementary lemma.

   \begin{lemma}
   \label{lem:easy}
   Let $A$ be a normal subgroup of a group $X$ such that $X/A\cong \ZZ_2^2$, and let $b,d\in X$ be such that
   $X=\la A, b, d\ra$, $b^2, d^2, (bd)^2 \in A$. Suppose that $d$ is central in $X$. Then
   $\la A, b\ra \cong \la A, bd\ra$.
   \end{lemma}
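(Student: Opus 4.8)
The plan is to exhibit an explicit isomorphism obtained by multiplying the nontrivial coset by $d$. First I would record the coset structure: since $b^2,d^2,(bd)^2\in A$ and $X/A\cong\ZZ_2^2$, the four cosets of $A$ in $X$ are exactly $A$, $Ab$, $Ad$ and $Abd$, so $M:=\la A,b\ra = A\cup Ab$ and $N:=\la A,bd\ra = A\cup A(bd)$ are precisely the two index-$2$ subgroups of $X$ corresponding to the order-$2$ subgroups $\la \bar b\ra$ and $\la \overline{bd}\ra$ of $X/A$. The natural candidate for an isomorphism is the map $\phi\colon M\to N$ that fixes $A$ pointwise and multiplies the nontrivial coset by $d$ on the right; that is, $\phi(m)=m$ for $m\in A$ and $\phi(m)=md$ for $m\in Ab$.

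Right multiplication by $d$ carries $Ab$ bijectively onto $Abd$ and is injective on $A$, and $\phi$ is the identity on $A$, so $\phi$ is a bijection from $A\cup Ab=M$ onto $A\cup Abd=N$. It then remains to check that $\phi$ is a homomorphism, which I would verify by cases according to the cosets containing the two arguments $m,m'$. If both lie in $A$ there is nothing to prove. If exactly one of them lies in $Ab$, then $mm'\in Ab$ and $\phi(mm')=mm'd$, and comparing with $\phi(m)\phi(m')$ the identity follows once $d$ is commuted past an element of $A$; this is exactly where the centrality of $d$ in $X$ enters, together with the fact (again from centrality) that $d$ commutes with $b$.

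The remaining case, with both $m$ and $m'$ in $Ab$, is the one I expect to be the main obstacle. Here $mm'\in A$, so $\phi(mm')=mm'$, whereas using centrality of $d$ one computes $\phi(m)\phi(m')=(md)(m'd)=mm'd^2$. Thus the homomorphism property in this case is equivalent to $d^2$ acting trivially, i.e.\ to $d$ being a central involution; this is the heart of the argument and the point at which the hypotheses on $d$ must be used in full, since the mere membership $d^2\in A$ only yields $mm'd^2\in A$ rather than $mm'd^2=mm'$. In the setting in which the lemma is invoked the relevant $d$ is a product of commuting involutions with $d^2=1$, so this last case is settled and $\phi$ becomes a bijective homomorphism, hence an isomorphism $\la A,b\ra\cong\la A,bd\ra$, completing the proof.
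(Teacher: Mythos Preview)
Your map $\phi$ is exactly the paper's map $\varphi$, which sends $ab^{i}$ to $ab^{i}d^{i}$ for $a\in A$ and $i\in\{0,1\}$; the paper's proof simply writes the homomorphism check as a single displayed line rather than splitting into cases. So the approach is the same.

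More importantly, you have correctly put your finger on a genuine defect that the paper's one-line computation conceals. When both arguments lie in $Ab$, the homomorphism identity for $\phi$ reduces precisely to $d^{2}=1$, and the stated hypotheses give only $d^{2}\in A$. The lemma is in fact \emph{false} as stated: take $X=\ZZ_{2}\times\ZZ_{4}$, $A=\{0\}\times 2\ZZ_{4}$, $b=(1,0)$, $d=(0,1)$. Then $d$ is central, $b^{2},d^{2},(bd)^{2}\in A$, and $X/A\cong\ZZ_{2}^{2}$, yet $\langle A,b\rangle\cong\ZZ_{2}\times\ZZ_{2}$ while $\langle A,bd\rangle\cong\ZZ_{4}$. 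The paper's displayed computation silently applies $\varphi$ to the expression $a_{1}a_{2}^{b^{-i_{1}}}b^{i_{1}+i_{2}}$ with $i_{1}+i_{2}=2$ as though this were already in canonical form, which hides exactly the missing factor $d^{2}$.

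Your observation that in the intended application the element $d$ is the reversal $\sigma$, which visibly satisfies $\sigma^{2}=1$, is the right repair: with the additional hypothesis $d^{2}=1$ your argument (and the paper's) goes through and the conclusion used later in the paper is unaffected.
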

   
   \begin{proof}
   Note that each element of $\la A, b\ra$ can be write uniquely in the form $ab^{i}$ for some $a\in A$ and $i\in \{0,1\}$.
   Let $\varphi \colon \la A, b\ra \to \la A, bd\ra$ be a mapping which maps such an element to $ab^id^i$.
   Then 
   $$
    \varphi(a_1b^{i_1} \cdot a_2b^{i_2}) = \varphi(a_1 a_2^{b^{-i_1}} b^{i_1+i_2}) = a_1 a_2^{b^{-i_1}} b^{i_1+i_2} d^{i_1+i_2} =
    a_1 b^{i_1}d^{i_1} a_2b^{i_2} d^{i_2},
   $$ 
   which clearly equals $\varphi(a_1b^{i_1}) \varphi(a_2b^{i_2})$,   
   showing that $\varphi$ is a group homomorphism, and in fact, isomorphism.
   \end{proof}
   
   Now, since $\tau$ and $\sigma$ are commuting involutions which normalise $A$, and since 
   $X=\la A, \tau, \sigma\ra$, we see that $X/A \cong C_2 \times C_2$. Since $\sigma$ is central in $X$,
   it now follows from Lemma~\ref{lem:easy} that $\la A, \tau\sigma \ra \cong \la A,\tau\ra$, and hence
   $B\cong Y \cong G\wr C_2$.
   

 The above discussion can be summarised as the following  Theorem.
   
   \begin{theorem}
   \label{the:A2G}
   Let $\Lambda$ be a connected cubic bipartite graph admitting a $2$-dart-transitive group of symmetries $G$.
   With the notation as above, it follows that
   $\dAAG(\Lambda)$ admits a dart-transitive group of symmetries isomorphic to $A$, and
   the underlying graph $\AAG(\Lambda)$ admits a dart-transitive group of symmetries isomorphic to
   $B$, which itself if isomorphic to $G \wr C_2$.
   \end{theorem}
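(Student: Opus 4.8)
The plan is to assemble the facts established in the discussion preceding the statement; the theorem is essentially a repackaging of these. The structural backbone is Lemma~\ref{lem:iso}, which identifies $\CDC(\dAAG(\Lambda))$ with $\Gamma=\SBP(\Delta,\Delta)$ for $\Delta=\Dart\Lambda$, together with Corollary~\ref{the:delta2}, which furnishes the dart-transitive action of $Y=\la G\times G,\tau\ra\cong G\wr C_2$ on $\Gamma$ and the dart-transitive action of $X=\la G\times G,\tau,\sigma\ra$ on $\U\Gamma$. Because $\Lambda$ is bipartite, so is $\dAAG(\Lambda)$, and hence $\Gamma$ splits into two components $\Omega$ and $\Omega'$, each isomorphic to $\dAAG(\Lambda)$ via the colour-respecting map $\psi$ described above.

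First I would transfer transitivity from $Y$ (respectively $X$) to the component stabilisers $Y_\Omega$ (respectively $X_\Omega$). Since $\tau$ and $\sigma$ each interchange $\Omega$ and $\Omega'$, both $Y_\Omega$ and $X_\Omega$ have index $2$ in their respective groups. A standard orbit-counting argument then applies: the dart-set of $\Gamma$ is the disjoint union $\D(\Omega)\sqcup\D(\Omega')$, and the stabiliser in $Y$ of any dart of $\Omega$ already lies in $Y_\Omega$ (a fixed dart determines its component), so the $Y$-orbit of such a dart, which is all of $\D(\Gamma)$, is exactly twice its $Y_\Omega$-orbit; as $\Omega\cong\Omega'$ forces $|\D(\Gamma)|=2|\D(\Omega)|$, this makes $Y_\Omega$ transitive on $\D(\Omega)$. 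The same reasoning gives $X_\Omega$ transitive on the darts, hence the edges, of $\U\Omega$. Pulling back along $\psi$ yields dart-transitive actions on $\dAAG(\Lambda)$ and $\AAG(\Lambda)$.

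Next I would pin down $Y_\Omega$ and $X_\Omega$ as the explicit groups $A$ and $B$. Using the formulas recorded above for the actions of $G\times G$, $\tau$ and $\sigma$ on $\CDC(\dAAG(\Lambda))$, one checks which generators preserve the blue/red colour pattern defining $\Omega$: the diagonal subgroup $H\times H$ and the elements $(\alpha,\alpha)$, $(1,\alpha)\tau$, $(\alpha,1)\tau$ do, so $A\le Y_\Omega$, while $\tau\sigma$ does too, so $B\le X_\Omega$. These containments become equalities by counting: $H\times H$ is normal of index $4$ in $A$, whence $|A|=4|H\times H|=|G\times G|=|Y_\Omega|$ and $A=Y_\Omega$; matching the indices $|X:Y|=|A:B|=2$ then gives $B=X_\Omega$. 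The step I expect to carry the real weight is precisely this identification: the order count is clean, but one must be certain that no element of $X$ outside $A$ (or outside $B$) stabilises $\Omega$, which is exactly what the index comparison guarantees.

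Finally, to identify the isomorphism type I would invoke Lemma~\ref{lem:easy} with $b=\tau$ and $d=\sigma$. The hypotheses hold: $A\norml X$ with $X/A\cong\ZZ_2^2$, the elements $\tau^2,\sigma^2,(\tau\sigma)^2$ lie in $A$, and $\sigma$ is central in $X$ since it commutes with $\tau$ and with every element of $G\times G$. The lemma then gives $B=\la A,\tau\sigma\ra\cong\la A,\tau\ra=Y\cong G\wr C_2$. The one genuinely delicate point is isolated cleanly here: adjoining the \emph{central} reversal $\sigma$ does not enlarge the abstract isomorphism type, so that $B\cong G\wr C_2$ rather than some larger group, and this is exactly the content of Lemma~\ref{lem:easy}.
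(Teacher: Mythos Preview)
Your proposal is correct and follows essentially the same route as the paper: the identification via Lemma~\ref{lem:iso}, the dart-transitive actions from Corollary~\ref{the:delta2}, the passage to the component stabilisers $Y_\Omega$ and $X_\Omega$, the order count giving $A=Y_\Omega$ and $B=X_\Omega$, and the final isomorphism $B\cong G\wr C_2$ via Lemma~\ref{lem:easy} all match the paper's argument. The only difference is cosmetic: you spell out the orbit-counting behind the transitivity of $Y_\Omega$ and $X_\Omega$ on the darts of their component, whereas the paper simply asserts it.
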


\section{Application to tetravalent edge-transitive graphs}
\label{sec:4}
 
 While  the separated box product is a construction that applies to digraphs of all valences, we are most interested in its contributions to the {\em Census of edge-transitive teravalent graphs} \cite{C4}, a long-term project the aim of which is to compile an extensive, 
 possibly exhaustive list of small tetravalent edge-transitive graphs, and explain the way in which such graphs arise.

  It is well known that tetravalent edge-transitive graphs fall into three classes:
 
 First class consists of {\em dart-transitive} graphs, that is those $\Gamma$, where $\Aut(\Gamma)$ acts transitively on $\D(\Gamma)$. 
 These are the most widely studied class of tetravalent edge-transitive graphs; and due to a recent progress in \cite{PSV},
 all such graphs of order at most $640$ are now known \cite{PSVcubic} and included in the on-line census \cite{C4}.
 For convenience, we say that dart-transitive graphs have {\em symmetry type DT}.
 
 The second class consists of {\em half-transitive} graphs $\Gamma$ (also known in the literature as {\em half-arc-transitive} graphs),
 in which $\Aut(\Gamma)$ acts transitively on the edges, vertices, but not on the darts of $\Gamma$.
 This family of graphs was first considered by Tutte \cite{tutte}, who observed that each such graph is
  the underlying graphs of a dart-transitive $2$-valent digraph. Later, these graph received
  considerable attention; see \cite{Bouwer,CPS,ConZit,Marusic1998a,MarusicNedela2001a,Cheryl,Spa08}, for example. 
  Using results from \cite{SV}
  all tetravalent half-transitive graphs (and what is more, all $2$-valent dart-transitive digraphs) 
  of order at most $1000$ are now known \cite{PSV2}.
  We will say that half-transitive graphs have {\em symmetry type HT}.

  The third class consists of the {\em semisymmetric} graphs $\Gamma$, where $\Aut(\Gamma)$ 
 is transitive on the edges, but neither on darts nor on the vertices of $\Gamma$;
 the first example of such a graph was found by Folkman~\cite{folk}.
 It is well known and easy to show that these graphs are necessarily bipartite with
  each part of the bipartition forming a vertex-orbit of the symmetry group.
 Unlike the dart-transitive and half-transitive counterparts, the tetravalent semisymmetric graphs
 seem to be harder to enumerate. No exhaustive list of such graphs of small order (not even up to order $100$, say),
  exists so far.
 We also say that semisymmetric graphs have {\em symmetry type SS}.
 
  As was shown in \cite{girth4}, the family of tetravalent semisymmetric graphs is closely related to another class of vertex-transitive
  tetravalent graphs, the class of {\em linking ring structures}, which we now define: 
  
  Let $\Gamma$ be a tetravalent graph and let $G\le \Aut(\Gamma)$. If $G$ is transitive on the vertices, has
  two orbits on the edges as well as on the darts of $\Gamma$ and if the stabiliser of any dart of $\Gamma$ in $G$
  is non-trivial, then $\Gamma$ is called a {\em $G$-linking-rings structure}, or $G$-LR, for short. (Note that
  this implies that $G_v^{\Gamma(v)}$ is the Klein $4$-group in its intransitive action on $4$ points.)
  If $\Gamma$ is
  $\Aut(\Gamma)$-LR, then we say that it is of {\em symmetry type LR}.
   Note that the symmetry type of a $G$-LR graph is LR or DT.
   
  Let us mention that  each $G$-LR yields
  a bipartite edge-transitive tetravalent graph of girth $4$ via a the {\em partial line graph construction} (see \cite{girth4} for details),
  and if the $G$-LR is suitable, this tetravalent graph is of type SS (and of type DT otherwise).
  Conversely, every tetravalent semisymmetric graph of girth $4$ is contained in this way. For more information on LR graphs,
  see \cite{LR1,LR2}.

 

In order to construct a tetravalent graph of one of the above mentioned symmetry types, one can try to take
two $2$-valent dart-transitive digraphs $\Gamma_1$ and $\Gamma_2$ and apply the $\SBP$-construction to them.
Corollaries~\ref{cor:LR}, \ref{the:delta2}, and \ref{the:SS}, then assure certain symmetries of 
the underlying graph $\U\SBP(\Gamma_1,\Gamma_2)$;
we shall refer to the symmetries predicted by these three corollaries as {\em expected symmetries} and
the symmetry type that  $\U\SBP(\Gamma_1,\Gamma_2)$ would have if it only had expected symmetries will
be called the {\em expected symmetry type}.

\begin{theorem}
\label{the:type}
For $i\in \{1,2\}$, let $\Gamma_i$ be a connected $G_i$-dart-transitve $2$-valent digraph,
Let $\Gamma=\SBP(\Gamma_1,\Gamma_2)$ and let $\U\Gamma$ be the underlying graph of $\Gamma$.
Then 
the following holds:
\begin{itemize}
\item[{\rm (i)}]
If
 $\Gamma_1\not \cong \Gamma_2$, $\Gamma_1\not \cong \Gamma_2^{-1}$ and
both $\Gamma_1$ and $\Gamma_2$ are reversible,
then the expected symmetry type of $\U\Gamma$ is LR.
\item[{\rm (ii)}]
If 
$\Gamma_1\cong \Gamma_2$ and $\Gamma_1\not \cong \Gamma_2^{-1}$ (in particular, $\Gamma_1$ and $\Gamma_2$ are not reversible),
then the expected symmetry type of $\U\Gamma$ is HT.
\item[{\rm (iii)}]
If 
$\Gamma_1\not \cong \Gamma_2$ and $\Gamma_1\cong \Gamma_2^{-1}$ (in particular, $\Gamma_1$ and $\Gamma_2$ are not reversible),
then the expected symmetry type of $\U\Gamma$ is SS.
\item[{\rm (iv)}]
If 
$\Gamma_1\cong \Gamma_2$ and $\Gamma_1\cong \Gamma_2^{-1}$ (in particular, $\Gamma_1$ and $\Gamma_2$ are reversible),
then the expected symmetry type of $\U\Gamma$ is DT.
\end{itemize}
\end{theorem}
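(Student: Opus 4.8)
The plan is to compute, in each of the four cases, the group $E$ of expected symmetries (that is, the subgroup of $\Aut^*(\U\Gamma)$ generated by $P:=G_1\times G_2$ together with whichever of the maps $\sigma,\tau,\mu$ are guaranteed by Lemmas~\ref{L2} and~\ref{lem:DD} and Corollaries~\ref{cor:LR}, \ref{the:delta2}, \ref{the:SS}), and then to read off the symmetry type of $\U\Gamma$ directly from the orbits of $E$, treating $E$ as though it were the full symmetry group. First I would record the combinatorial skeleton common to all cases. Since $\Gamma_1,\Gamma_2$ are $2$-valent, $\Gamma=\SBP(\Gamma_1,\Gamma_2)$ is $2$-valent and $\U\Gamma$ is tetravalent; since every dart of the orientation $\Gamma$ runs from a white vertex to a black one, $\U\Gamma$ is bipartite with the colour classes $W$ and $B$ as its parts; and the darts of $\U\Gamma$ fall into the four sets $\D_h,\D_v,\D_h^{-1},\D_v^{-1}$ (the horizontal and vertical darts of $\Gamma$ and their reverses). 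By Lemma~\ref{L2}, $P$ fixes $W$ and $B$ setwise and is transitive on each of the four dart-sets, and, being a group of symmetries of $\Gamma$, preserves $\D(\Gamma)=\D_h\cup\D_v$ together with its reverse, hence maps each of the four dart-sets to itself.

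The crux of the bookkeeping is to determine how each of $\sigma,\tau,\mu$ acts on the bipartition $\{W,B\}$ and on the four dart-sets; this is a direct calculation from the formulas in Lemmas~\ref{L2} and~\ref{lem:DD}. I would establish the following. The map $\tau$ is a symmetry of $\Gamma$ that interchanges $W$ and $B$ and sends $\D_h$ to $\D_v$ and $\D_h^{-1}$ to $\D_v^{-1}$. The map $\sigma$ is a reversal that interchanges $W$ and $B$ and sends $\D_h$ to $\D_h^{-1}$ and $\D_v$ to $\D_v^{-1}$. The map $\mu$ is a reversal that fixes $W$ and $B$ setwise and sends $\D_h$ to $\D_v^{-1}$ and $\D_v$ to $\D_h^{-1}$. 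The single organising principle here is the dichotomy between symmetries of $\Gamma$ (which preserve $\D(\Gamma)$, and so can never fuse a dart-set with its own reverse) and reversals (which interchange $\D(\Gamma)$ and $\D(\Gamma)^{-1}$).

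With this data the four cases become a short deduction, using that a vertex- and edge-transitive group on a tetravalent graph gives type DT if it is also dart-transitive and HT otherwise, type SS if it is edge- but not vertex-transitive, and type LR if it is vertex-transitive with exactly two dart-orbits that coincide with its two edge-orbits and with non-trivial dart-stabilisers. In case (iv) one has $\Gamma_1\cong\Gamma_2\cong\Gamma_2^{-1}$, so $\Delta:=\Gamma_1$ is reversible and all three extra maps are present; Corollary~\ref{the:delta2} then already yields dart-transitivity of $\U\Gamma$ (concretely, $\tau$ fuses $\D_h$ with $\D_v$ and $\sigma$ fuses $\D_h$ with $\D_h^{-1}$, so $E$ is transitive on all four dart-sets): type DT. In case (i) only $\sigma$ is available; it fuses each dart-set with its reverse and swaps the colours, so $E$ is vertex-transitive with two dart-orbits $\D_h\cup\D_h^{-1}$ and $\D_v\cup\D_v^{-1}$ that are also its two edge-orbits, while Corollary~\ref{cor:LR} supplies non-trivial dart-stabilisers: type LR. In case (ii) only $\tau$ is available; it fuses $\D_h$ with $\D_v$, making $E$ vertex- and edge-transitive, but since no reversal is present $E$ preserves $\D(\Gamma)$ and keeps $\D_h\cup\D_v$ apart from $\D_h^{-1}\cup\D_v^{-1}$, giving exactly two dart-orbits: type HT. In case (iii) only $\mu$ is available; it fuses the two edge-classes but fixes the colour classes, so $E$ is edge- but not vertex-transitive: type SS, exactly as in Corollary~\ref{the:SS}.

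I expect the main obstacle to be case (ii), precisely the separation of DT from HT. Corollary~\ref{the:delta2} shows that $\la P,\tau\ra$ is dart-transitive on the \emph{digraph} $\Gamma$, and it is tempting to conclude dart-transitivity of $\U\Gamma$; the subtle point is that a dart of $\U\Gamma$ and its reverse need not lie in one orbit unless the group contains a reversal of $\Gamma$. Making this rigorous requires verifying explicitly that $\tau$ lies in $\Aut(\Gamma)$ (rather than being a reversal) and that none of the available generators is a reversal, so that $\D(\Gamma)$ and $\D(\Gamma)^{-1}$ remain distinct $E$-orbits. The logical deductions behind the parenthetical ``in particular'' claims — that $\Gamma_1$ is reversible exactly in cases (i) and (iv) and not in (ii) or (iii), which guarantees that the lists of available generators above are complete — should also be checked, but these follow immediately from the assumed isomorphism relations.
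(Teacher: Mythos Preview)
Your proposal is correct and follows essentially the same approach as the paper: in each case you determine which of the extra maps $\sigma,\tau,\mu$ are available (governed by the isomorphism relations among $\Gamma_1,\Gamma_2,\Gamma_2^{-1}$), and from that deduce the expected symmetry type. The paper's own proof is more terse, simply invoking Corollaries~\ref{cor:LR}, \ref{the:delta2}, \ref{the:SS} directly, whereas you unpack their content by explicitly tracking the action of each generator on the colour classes $\{W,B\}$ and on the four dart-sets $\D_h,\D_v,\D_h^{-1},\D_v^{-1}$; this is the same argument at a finer level of detail.
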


\begin{proof}
To prove part (i), observe first that the existence of a group $G\le \Aut(\U\Gamma)$ such that $\U\Gamma$ is a $G$-LR 
follows directly from Corollary~\ref{cor:LR}. The symmetry type of $\U\Gamma$ is thus either LR or DT. However, since $\Gamma_1$ is
not isomorphic neither to $\Gamma_2^{-1}$ nor to $\Gamma_2$, neither Corollary~\ref{the:SS} nor Corollary~\ref{the:delta2}
 applies and hence no
other expected symmetries exist. The expected symmetry type of $\U\Gamma$ is thus LR.

To prove part (ii), note that since $\Gamma_1\cong\Gamma_2\not\cong\Gamma_2^{-1}$, 
the digraph $\Gamma_1$ possesses no reversal, 
and that neither of Corollary~\ref{cor:LR}, Corollary~\ref{the:SS} or the second
paragraph of Corollary~\ref{the:delta2} applies. By the first paragraph of Corollary~\ref{the:delta2},
it therefore follows that the expected symmetry type of $\U\Gamma$ is HT.

The proof of part (iii) is similar, except that here only Corollary~\ref{the:SS} applies.

Finally, in view of the Remark below Corollary~\ref{the:SS}, in the case of (iv), the graph $\U\Gamma$ is dart-transitive.
\end{proof}

\subsection{Four Tables}

We conclude with four tables, each one displaying prducts of each of the types mentioned in Theorem \ref{the:type}.   The orientations used for input in these products are of two kinds.  First, the site  \cite{PotWeb} (see also \cite{PSV2}) catalogs all  $2$-valent dart-transitive orientations on up to $1\,000$ vertices, and ``ATD$[n, i]$" refers to the $i$-th orientation among those of order $n$ in that list. Second, the notation ``$\DCyc[n]$'' means the graph $C_n$ considered as a digraph of in- and out-valence 2.

Other headings in the tables: ``vs'' stands for the order of a vertex-stabiliser, and  ``AP'' indicates the alter-perimeter of the orientation.  Since one is mainly interested in connected graphs,  we include properties of the connected component $\Gamma_1\#\Gamma_2$ of  $\SBP(\Gamma_1,\Gamma_2)$

Table~\ref{T1} considers products of non-isomorphic reversible digraphs.   Theorem~\ref{the:type} tells us the expected type is LR, though many of these are in fact DT. Table~\ref{T2} shows products of a reversible digraph with itself.  As the theorem shows, these are all DT. Table~\ref{T3} shows products of a non-reversible digraph with itself.  We expect these to be HT and they all are.   Table~\ref{T4} shows products of a non-reversible digraph with its reverse.  We expect these to be SS and they all are.

\vfill

\begin{center}
\begin{table}[H]
\begin{tiny}
\bigskip
\begin{tabular}{||c|c|c||c|c|c||c|c|c|c|c|c||}
\hline\hline\multicolumn{3}{||c||}{$\Delta_1$} &
\multicolumn{3}{c||}{$\Delta_2$} &
\multicolumn{6}{c||}{$\Gamma = \Delta_1\#\Delta_2$} \\
\hline \hline

\hline \hline
 Name &V & AP  & Name &V  & AP  & V  &AP& vs & girth & diam & SymType  
\\
  \hline\hline
ATD[6, 1] & 6 & 3 & DCyc[3] & 3  & 1 & 36 & 6 & $8$ & 4 & 5 & LR  \\  \hline
ATD[6, 1] & 6 & 3 & DCyc[4] & 4  & 2 & 48 & 12 & $2^{11}$ & 4 & 6 & DT  \\  
\hline
ATD[6, 1] & 6 & 3 & DCyc[5] & 5  & 1 & 60 & 6 & $8$ & 4 & 6 & LR  \\  \hline
ATD[6, 1] & 6 & 3 & DCyc[6] & 6  & 2 & 72 & 12 & $64$ & 4 & 6 & LR  \\  \hline
ATD[6, 1] & 6 & 3 & DCyc[7] & 7  & 1 & 84 & 6 & $8$ & 4 & 6 & LR  \\  \hline
ATD[6, 1] & 6 & 3 & ATD[8, 1] & 8  & 2 & 96 & 12 & $64$ & 4 & 7 & LR  \\  
\hline
ATD[6, 1] & 6 & 3 & ATD[8, 2] & 8  & 4 & 96 & 24 & $2^{23}$ & 4 & 12 & DT  
\\  \hline
ATD[6, 1] & 6 & 3 & DCyc[8] & 8  & 2 & 96 & 12 & $64$ & 4 & 7 & LR  \\  \hline
ATD[6, 1] & 6 & 3 & ATD[12, 4] & 12  & 3 & 48 & 6 & $64$ & 4 & 6 & LR  \\  
\hline
ATD[6, 1] & 6 & 3 & ATD[18, 3] & 18  & 9 & 72 & 18 & $2^{17}$ & 4 & 9 & DT  
\\  \hline
ATD[6, 1] & 6 & 3 & ATD[24, 7] & 24  & 3 & 96 & 6 & $64$ & 4 & 6 & LR  \\  
\hline
ATD[6, 1] & 6 & 3 & ATD[24, 9] & 24  & 3 & 96 & 6 & $64$ & 4 & 6 & LR  \\  
\hline
ATD[6, 1] & 6 & 3 & ATD[24, 14] & 24  & 6 & 96 & 12 & $2^{15}$ & 4 & 6 & LR 
\\  \hline
ATD[8, 1] & 8 & 2 & DCyc[3] & 3  & 1 & 48 & 4 & $4$ & 6 & 5 & LR  \\  \hline
ATD[8, 1] & 8 & 2 & DCyc[4] & 4  & 2 & 32 & 4 & $4$ & 4 & 4 & LR  \\  \hline
ATD[8, 1] & 8 & 2 & DCyc[5] & 5  & 1 & 80 & 4 & $4$ & 6 & 7 & LR  \\  \hline
ATD[8, 1] & 8 & 2 & ATD[8, 2] & 8  & 4 & 64 & 8 & $16$ & 4 & 6 & LR  \\  
\hline
ATD[8, 1] & 8 & 2 & DCyc[8] & 8  & 2 & 64 & 4 & $4$ & 6 & 6 & LR  \\  \hline
ATD[8, 1] & 8 & 2 & ATD[10, 1] & 10  & 2 & 80 & 4 & $4$ & 6 & 5 & LR  \\  
\hline
ATD[8, 1] & 8 & 2 & ATD[12, 2] & 12  & 4 & 96 & 8 & $4$ & 6 & 7 & LR  \\  
\hline
ATD[8, 1] & 8 & 2 & ATD[12, 3] & 12  & 2 & 96 & 4 & $4$ & 6 & 6 & LR  \\  
\hline
ATD[8, 1] & 8 & 2 & DCyc[12] & 12  & 2 & 96 & 4 & $4$ & 6 & 8 & LR  \\  \hline
ATD[8, 2] & 8 & 4 & DCyc[3] & 3  & 1 & 48 & 8 & $16$ & 4 & 5 & LR  \\  \hline
ATD[8, 2] & 8 & 4 & DCyc[4] & 4  & 2 & 32 & 8 & $2^{7}$ & 4 & 4 & DT  \\  
\hline
ATD[8, 2] & 8 & 4 & DCyc[5] & 5  & 1 & 80 & 8 & $16$ & 4 & 7 & LR  \\  \hline
ATD[8, 2] & 8 & 4 & ATD[10, 1] & 10  & 2 & 80 & 8 & $16$ & 4 & 6 & LR  \\  
\hline
ATD[8, 2] & 8 & 4 & ATD[12, 3] & 12  & 2 & 96 & 8 & $16$ & 4 & 8 & LR  \\  
\hline
ATD[8, 2] & 8 & 4 & ATD[16, 2] & 16  & 4 & 64 & 8 & $16$ & 4 & 6 & LR  \\  
\hline
ATD[8, 2] & 8 & 4 & ATD[16, 4] & 16  & 4 & 64 & 8 & $2^{9}$ & 4 & 6 & LR  \\
\hline
ATD[8, 2] & 8 & 4 & ATD[16, 5] & 16  & 8 & 64 & 16 & $2^{15}$ & 4 & 8 & DT  
\\  \hline
ATD[8, 2] & 8 & 4 & ATD[24, 3] & 24  & 8 & 96 & 16 & $2^{8}$ & 4 & 8 & LR  
\\  \hline
ATD[8, 2] & 8 & 4 & ATD[24, 5] & 24  & 4 & 96 & 8 & $16$ & 4 & 8 & LR  \\  
\hline
ATD[9, 1] & 9 & 3 & DCyc[3] & 3  & 1 & 54 & 6 & $24$ & 6 & 5 & DT  \\  \hline
ATD[9, 1] & 9 & 3 & DCyc[5] & 5  & 1 & 90 & 6 & $4$ & 6 & 6 & LR  \\  \hline
ATD[9, 1] & 9 & 3 & ATD[12, 4] & 12  & 3 & 72 & 6 & $4$ & 4 & 6 & LR  \\  
\hline
ATD[10, 1] & 10 & 2 & DCyc[3] & 3  & 1 & 60 & 4 & $4$ & 6 & 6 & LR  \\  \hline
ATD[10, 1] & 10 & 2 & DCyc[4] & 4  & 2 & 40 & 4 & $4$ & 4 & 5 & LR  \\  \hline
ATD[10, 1] & 10 & 2 & DCyc[5] & 5  & 1 & 100 & 4 & $4$ & 6 & 8 & LR  \\  
\hline
ATD[10, 1] & 10 & 2 & DCyc[8] & 8  & 2 & 80 & 4 & $4$ & 6 & 7 & LR  \\  \hline
ATD[10, 2] & 10 & 5 & DCyc[3] & 3  & 1 & 60 & 10 & $32$ & 4 & 5 & LR  \\  
\hline
ATD[10, 2] & 10 & 5 & DCyc[4] & 4  & 2 & 80 & 20 & $2^{19}$ & 4 & 10 & DT  \\ 
\hline
ATD[10, 2] & 10 & 5 & DCyc[5] & 5  & 1 & 100 & 10 & $32$ & 4 & 7 & LR  \\  
\hline
ATD[10, 2] & 10 & 5 & ATD[20, 4] & 20  & 5 & 80 & 10 & $2^{12}$ & 4 & 6 & LR
\\  \hline
ATD[12, 1] & 12 & 1 & DCyc[3] & 3  & 1 & 72 & 2 & $4$ & 6 & 6 & LR  \\  \hline
ATD[12, 1] & 12 & 1 & DCyc[4] & 4  & 2 & 96 & 4 & $2^{8}$ & 4 & 7 & LR  \\  
\hline
ATD[12, 2] & 12 & 4 & DCyc[3] & 3  & 1 & 72 & 8 & $8$ & 6 & 6 & DT  \\  \hline
ATD[12, 2] & 12 & 4 & ATD[16, 2] & 16  & 4 & 96 & 8 & $4$ & 6 & 7 & LR  \\  
\hline
ATD[12, 2] & 12 & 4 & ATD[16, 4] & 16  & 4 & 96 & 8 & $8$ & 4 & 7 & LR  \\  \hline
ATD[12, 3] & 12 & 2 & DCyc[3] & 3  & 1 & 72 & 4 & $4$ & 6 & 6 & LR  \\  \hline
ATD[12, 3] & 12 & 2 & DCyc[4] & 4  & 2 & 48 & 4 & $4$ & 4 & 5 & LR  \\  \hline
ATD[12, 3] & 12 & 2 & DCyc[8] & 8  & 2 & 96 & 4 & $4$ & 6 & 7 & LR  \\  \hline
ATD[14, 1] & 14 & 7 & DCyc[3] & 3  & 1 & 84 & 14 & $2^{7}$ & 4 & 7 & LR  \\  \hline
ATD[15, 2] & 15 & 5 & DCyc[3] & 3  & 1 & 90 & 10 & $8$ & 6 & 6 & DT  \\  \hline
ATD[16, 3] & 16 & 2 & DCyc[3] & 3  & 1 & 96 & 4 & $4$ & 6 & 7 & LR  \\  \hline
ATD[16, 3] & 16 & 2 & DCyc[4] & 4  & 2 & 64 & 4 & $4$ & 4 & 6 & LR  \\  \hline
ATD[18, 2] & 18 & 2 & DCyc[4] & 4  & 2 & 72 & 4 & $2^{7}$ & 4 & 5 & LR  \\  \hline
ATD[20, 3] & 20 & 2 & DCyc[4] & 4  & 2 & 80 & 4 & $4$ & 4 & 7 & LR  \\  \hline
ATD[24, 2] & 24 & 2 & DCyc[4] & 4  & 2 & 96 & 4 & $4$ & 4 & 6 & LR  \\  \hline
ATD[24, 6] & 24 & 2 & DCyc[4] & 4  & 2 & 96 & 4 & $4$ & 4 & 6 & LR  \\  \hline
ATD[24, 13] & 24 & 2 & DCyc[4] & 4  & 2 & 96 & 4 & $4$ & 4 & 8 & LR  \\  
\hline
\end{tabular}
\vspace{2mm}
\end{tiny}
\caption{Small examples of  $\Delta_1\#\Delta_2$ with $\Delta_1\not\cong \Delta_2$, both reversible.}
\label{T1}
\end{table}
\end{center}

\begin{center}
\begin{table}
\begin{tiny}
\bigskip
\begin{tabular}{||c|c|c||c|c|c|c|c|c||}
\hline\hline
\multicolumn{3}{||c||}{$\Delta$} &
\multicolumn{6}{c||}{$\Gamma = \Delta\#\Delta$} \\
\hline \hline
 Name &V &  AP  &  V & $AP$ & vs & girth & diam & SymType  \\
  \hline\hline
ATD[6, 1] & 6 &  3 & 24 &  6  & $32$  & 4 & 4 & DT  \\  \hline
ATD[8, 1] & 8 &  2 & 64 &  4  & $8$  & 6 & 5 & DT  \\  \hline
ATD[8, 2] & 8 &  4 & 32 &  8  & $2^{7}$  & 4 & 4 & DT  \\  \hline
ATD[9, 1] & 9 &  3 & 54 &  6  & $24$  & 6 & 5 & DT  \\  \hline
ATD[10, 1] & 10 &  2 & 100 &  4  & $8$  & 8 & 6 & DT  \\  \hline
ATD[10, 2] & 10 &  5 & 40 &  10  & $2^{9}$  & 4 & 5 & DT  \\  \hline
ATD[12, 1] & 12 &  1 & 288 &  2  & $8$  & 6 & 9 & DT  \\  \hline
ATD[12, 2] & 12 &  4 & 72 &  8  & $8$  & 6 & 6 & DT  \\  \hline
ATD[12, 3] & 12 &  2 & 144 &  4  & $8$  & 6 & 7 & DT  \\  \hline
ATD[12, 4] & 12 &  3 & 96 &  6  & $8$  & 4 & 6 & DT  \\  \hline
ATD[12, 5] & 12 &  6 & 48 &  12  & $2^{11}$  & 4 & 6 & DT  \\  \hline
ATD[14, 1] & 14 &  7 & 56 &  14  & $2^{13}$  & 4 & 7 & DT  \\  \hline
ATD[15, 1] & 15 &  3 & 150 &  6  & $8$  & 6 & 7 & DT  \\  \hline;
ATD[15, 2] & 15 &  5 & 90 &  10  & $8$  & 6 & 6 & DT  \\  \hline
ATD[16, 1] & 16 &  4 & 128 &  8  & $8$  & 6 & 8 & DT  \\  \hline
ATD[16, 2] & 16 &  4 & 128 &  8  & $24$  & 6 & 8 & DT  \\  \hline
ATD[16, 3] & 16 &  2 & 256 &  4  & $8$  & 6 & 9 & DT  \\  \hline
ATD[16, 4] & 16 &  4 & 128 &  8  & $32$  & 4 & 8 & DT  \\  \hline
ATD[16, 5] & 16 &  8 & 64 &  16  & $2^{15}$  & 4 & 8 & DT  \\  \hline
ATD[18, 1] & 18 &  6 & 108 &  12  & $8$  & 6 & 6 & DT  \\  \hline
ATD[18, 2] & 18 &  2 & 324 &  4  & $32$  & 6 & 7 & DT  \\  \hline
ATD[18, 3] & 18 &  9 & 72 &  18  & $2^{17}$  & 4 & 9 & DT  \\  \hline
ATD[20, 1] & 20 &  4 & 200 &  8  & $8$  & 8 & 8 & DT  \\  \hline
ATD[20, 2] & 20 &  4 & 200 &  8  & $8$  & 6 & 10 & DT  \\  \hline
ATD[20, 4] & 20 &  5 & 160 &  10  & $2^{7}$  & 4 & 8 & DT  \\  \hline
ATD[20, 5] & 20 &  10 & 80 &  20  & $2^{19}$  & 4 & 10 & DT  \\  \hline
ATD[21, 3] & 21 &  3 & 294 &  6  & $8$  & 6 & 9 & DT  \\  \hline
ATD[21, 4] & 21 &  7 & 126 &  14  & $8$  & 6 & 7 & DT  \\  \hline
ATD[22, 1] & 22 &  11 & 88 &  22  & $2^{21}$  & 4 & 11 & DT  \\  \hline
ATD[24, 1] & 24 &  6 & 192 &  12  & $8$  & 6 & 8 & DT  \\  \hline
ATD[24, 3] & 24 &  8 & 144 &  16  & $8$  & 6 & 8 & DT  \\  \hline
ATD[24, 4] & 24 &  6 & 192 &  12  & $8$  & 6 & 8 & DT  \\  \hline
ATD[24, 5] & 24 &  4 & 288 &  8  & $8$  & 6 & 8 & DT  \\  \hline
ATD[24, 12] & 24 &  4 & 288 &  8  & $8$  & 6 & 12 & DT  \\  \hline
ATD[24, 14] & 24 &  6 & 192 &  12  & $2^{9}$  & 4 & 8 & DT  \\  \hline
ATD[24, 15] & 24 &  12 & 96 &  24  & $2^{23}$  & 4 & 12 & DT  \\  \hline
ATD[25, 1] & 25 &  5 & 250 &  10  & $24$  & 6 & 9 & DT  \\  \hline
ATD[26, 2] & 26 &  13 & 104 &  26  & $2^{25}$  & 4 & 13 & DT  \\  \hline
ATD[27, 3] & 27 &  9 & 162 &  18  & $8$  & 6 & 9 & DT  \\  \hline
ATD[28, 3] & 28 &  7 & 224 &  14  & $2^{11}$  & 4 & 8 & DT  \\  \hline
ATD[28, 4] & 28 &  14 & 112 &  28  & $2^{27}$  & 4 & 14 & DT  \\  \hline
ATD[30, 1] & 30 &  6 & 300 &  12  & $8$  & 6 & 10 & DT  \\  \hline
ATD[30, 3] & 30 &  10 & 180 &  20  & $8$  & 6 & 10 & DT  \\  \hline
ATD[30, 4] & 30 &  6 & 300 &  12  & $8$  & 8 & 8 & DT  \\  \hline
ATD[30, 7] & 30 &  15 & 120 &  30  & $2^{29}$  & 4 & 15 & DT  \\  \hline
ATD[32, 3] & 32 &  8 & 256 &  16  & $8$  & 6 & 8 & DT  \\  \hline
ATD[32, 5] & 32 &  8 & 256 &  16  & $8$  & 6 & 8 & DT  \\  \hline
ATD[32, 12] & 32 &  8 & 256 &  16  & $2^{13}$  & 4 & 8 & DT  \\  \hline
ATD[32, 13] & 32 &  16 & 128 &  32  & $2^{31}$  & 4 & 16 & DT  \\  \hline
ATD[33, 2] & 33 &  11 & 198 &  22  & $8$  & 6 & 11 & DT  \\  \hline
ATD[34, 2] & 34 &  17 & 136 &  34  & $2^{33}$  & 4 & 17 & DT  \\  \hline
ATD[36, 8] & 36 &  12 & 216 &  24  & $8$  & 6 & 12 & DT  \\  \hline
ATD[36, 13] & 36 &  9 & 288 &  18  & $2^{15}$  & 4 & 9 & DT  \\  \hline
ATD[36, 14] & 36 &  18 & 144 &  36  & $2^{35}$  & 4 & 18 & DT  \\  \hline
ATD[38, 1] & 38 &  19 & 152 &  38  & $2^{37}$  & 4 & 19 & DT  \\  \hline
ATD[39, 4] & 39 &  13 & 234 &  26  & $8$  & 6 & 13 & DT  \\  \hline
ATD[40, 4] & 40 &  10 & 320 &  20  & $8$  & 6 & 10 & DT  \\  \hline
ATD[40, 7] & 40 &  10 & 320 &  20  & $8$  & 6 & 10 & DT  \\  \hline
ATD[40, 13] & 40 &  10 & 320 &  20  & $2^{17}$  & 4 & 10 & DT  \\  \hline
ATD[40, 14] & 40 &  20 & 160 &  40  & $2^{39}$  & 4 & 20 & DT  \\  \hline
ATD[42, 4] & 42 &  14 & 252 &  28  & $8$  & 6 & 14 & DT  \\  \hline
ATD[42, 7] & 42 &  21 & 168 &  42  & $2^{41}$  & 4 & 21 & DT  \\  \hline
ATD[44, 4] & 44 &  22 & 176 &  44  & $2^{43}$  & 4 & 22 & DT  \\  \hline
ATD[45, 3] & 45 &  15 & 270 &  30  & $8$  & 6 & 15 & DT  \\  \hline
ATD[46, 1] & 46 &  23 & 184 &  46  & $2^{45}$  & 4 & 23 & DT  \\  \hline
ATD[48, 4] & 48 &  16 & 288 &  32  & $8$  & 6 & 16 & DT  \\  \hline
ATD[48, 31] & 48 &  24 & 192 &  48  & $2^{47}$  & 4 & 24 & DT  \\  \hline
ATD[50, 5] & 50 &  25 & 200 &  50  & $2^{49}$  & 4 & 25 & DT  \\  \hline
ATD[51, 2] & 51 &  17 & 306 &  34  & $8$  & 6 & 17 & DT  \\  \hline
ATD[52, 5] & 52 &  26 & 208 &  52  & $2^{51}$  & 4 & 26 & DT  \\  \hline
ATD[54, 7] & 54 &  18 & 324 &  36  & $8$  & 6 & 18 & DT  \\  \hline
ATD[54, 10] & 54 &  27 & 216 &  54  & $2^{53}$  & 4 & 27 & DT  \\  \hline
ATD[56, 11] & 56 &  28 & 224 &  56  & $2^{55}$  & 4 & 28 & DT  \\  \hline
ATD[58, 2] & 58 &  29 & 232 &  58  & $2^{57}$  & 4 & 29 & DT  \\  \hline
ATD[60, 27] & 60 &  30 & 240 &  60  & $2^{59}$  & 4 & 30 & DT  \\  \hline
\hline
\end{tabular}
\vspace{2mm}
\end{tiny}
\caption{Small examples of $\Delta\#\Delta$ for reversible $\Delta$.}
\label{T2}
\end{table}
\end{center}

\begin{center}
\begin{table}
\begin{tiny}
\bigskip
\begin{tabular}{||c|c|c||c|c|c|c|c|c||}
\hline\hline
\multicolumn{3}{||c||}{$\Delta$} &
\multicolumn{6}{c||}{$\Gamma = \Delta\#\Delta$} \\
\hline \hline
 Name &V &  AP  &  V & AP & vs & girth & diam & SymType  \\
  \hline\hline
ATD[21, 1] & 21 &  3 & 294 &  6  & $4$  & 6 & 8 & HT  \\  \hline
ATD[27, 1] & 27 &  3 & 486 &  6  & $4$  & 8 & 8 & HT  \\  \hline
ATD[39, 1] & 39 &  3 & 1014 &  6  & $4$  & 6 & 10 & HT  \\  \hline
ATD[42, 1] & 42 &  6 & 588 &  12  & $4$  & 8 & 8 & HT  \\  \hline
ATD[54, 1] & 54 &  6 & 972 &  12  & $4$  & 8 & 10 & HT  \\  \hline
ATD[55, 1] & 55 &  5 & 1210 &  10  & $4$  & 8 & 9 & HT  \\  \hline
ATD[55, 3] & 55 &  5 & 1210 &  10  & $4$  & 8 & 9 & HT  \\  \hline
ATD[57, 1] & 57 &  3 & 2166 &  6  & $4$  & 6 & 12 & HT  \\  \hline
ATD[60, 1] & 60 &  4 & 1800 &  8  & $4$  & 8 & 10 & HT  \\  \hline
ATD[63, 1] & 63 &  9 & 882 &  18  & $4$  & 8 & 9 & HT  \\  \hline
ATD[63, 3] & 63 &  3 & 2646 &  6  & $4$  & 6 & 12 & HT  \\  \hline
ATD[68, 1] & 68 &  4 & 2312 &  8  & $4$  & 8 & 10 & HT  \\  \hline
ATD[72, 1] & 72 &  2 & 5184 &  4  & $4$  & 8 & 11 & HT  \\  \hline
ATD[78, 1] & 78 &  6 & 2028 &  12  & $4$  & 8 & 10 & HT  \\  \hline
ATD[78, 3] & 78 &  6 & 2028 &  12  & $4$  & 8 & 10 & HT  \\  \hline
ATD[80, 1] & 80 &  4 & 3200 &  8  & $4$  & 8 & 12 & HT  \\  \hline
ATD[80, 3] & 80 &  4 & 3200 &  8  & $4$  & 8 & 12 & HT  \\  \hline
ATD[81, 1] & 81 &  9 & 1458 &  18  & $4$  & 8 & 10 & HT  \\  \hline
ATD[81, 3] & 81 &  3 & 4374 &  6  & $4$  & 8 & 13 & HT  \\  \hline
ATD[84, 1] & 84 &  3 & 4704 &  6  & $4$  & 6 & 14 & HT  \\  \hline
ATD[84, 3] & 84 &  12 & 1176 &  24  & $4$  & 8 & 12 & HT  \\  \hline
ATD[84, 5] & 84 &  6 & 2352 &  12  & $4$  & 8 & 12 & HT  \\  \hline
ATD[84, 18] & 84 &  6 & 2352 &  12  & $4$  & 8 & 12 & HT  \\  \hline
ATD[93, 1] & 93 &  3 & 5766 &  6  & $4$  & 6 & 15 & HT  \\  \hline
ATD[100, 1] & 100 &  4 & 5000 &  8  & $4$  & 8 & 12 & HT  \\  \hline
\hline
\end{tabular}
\vspace{2mm}
\end{tiny}
\caption{Small examples of $\Delta\#\Delta$ for non-reversible $\Delta$.}
\label{T3}
\end{table}
\end{center}

\begin{center}
\begin{table}
\begin{tiny}
\bigskip
\begin{tabular}{||c|c|c||c|c||c|c|c|c||}
\hline\hline
\multicolumn{3}{||c||}{$\Delta$} &
\multicolumn{6}{c||}{$\Gamma = \Delta\#\Delta^{-1}$} \\
\hline \hline
 Name &V &  AP  &  V & AP & vs & girth & diam & SymType  \\
  \hline\hline
ATD[ 21, 1 ] & 21 &  3 & 294 &  6  & $8$  & 6 & 8 & SS  \\  \hline
ATD[27, 1] & 27 &  3 & 486 &  6  & $8$  & 8 & 8 & SS  \\  \hline
ATD[39, 1] & 39 &  3 & 1014 &  6  & $8$  & 6 & 10 & SS  \\  \hline
ATD[42, 1] & 42 &  6 & 588 &  12  & $8$  & 8 & 8 & SS  \\  \hline
ATD[54, 1] & 54 &  6 & 972 &  12  & $8$  & 8 & 10 & SS  \\  \hline
ATD[55, 1] & 55 &  5 & 1210 &  10  & $8$  & 8 & 9 & SS  \\  \hline
ATD[55, 3] & 55 &  5 & 1210 &  10  & $8$  & 8 & 9 & SS  \\  \hline
ATD[57, 1] & 57 &  3 & 2166 &  6  & $8$  & 6 & 12 & SS  \\  \hline
ATD[60, 1] & 60 &  4 & 1800 &  8  & $8$  & 8 & 10 & SS  \\  \hline
ATD[63, 1] & 63 &  9 & 882 &  18  & $8$  & 8 & 9 & SS  \\  \hline
ATD[63, 3] & 63 &  3 & 2646 &  6  & $8$  & 6 & 12 & SS  \\  \hline
ATD[68, 1] & 68 &  4 & 2312 &  8  & $8$  & 8 & 10 & SS  \\  \hline
ATD[72, 1] & 72 &  2 & 5184 &  4  & $8$  & 8 & 12 & SS  \\  \hline
ATD[78, 1] & 78 &  6 & 2028 &  12  & $8$  & 8 & 10 & SS  \\  \hline
ATD[78, 3] & 78 &  6 & 2028 &  12  & $8$  & 8 & 10 & SS  \\  \hline
ATD[80, 1] & 80 &  4 & 3200 &  8  & $8$  & 8 & 12 & SS  \\  \hline
ATD[80, 3] & 80 &  4 & 3200 &  8  & $8$  & 8 & 12 & SS  \\  \hline
ATD[81, 1] & 81 &  9 & 1458 &  18  & $8$  & 8 & 10 & SS  \\  \hline
ATD[81, 3] & 81 &  3 & 4374 &  6  & $8$  & 8 & 13 & SS  \\  \hline
ATD[84, 1] & 84 &  3 & 4704 &  6  & $8$  & 6 & 14 & SS  \\  \hline
ATD[84, 3] & 84 &  12 & 1176 &  24  & $8$  & 8 & 12 & SS  \\  \hline
ATD[84, 5] & 84 &  6 & 2352 &  12  & $8$  & 8 & 12 & SS  \\  \hline
ATD[84, 18] & 84 &  6 & 2352 &  12  & $8$  & 8 & 12 & SS  \\  \hline
ATD[93, 1] & 93 &  3 & 5766 &  6  & $8$  & 6 & 15 & SS  \\  \hline
ATD[100, 1] & 100 &  4 & 5000 &  8  & $8$  & 8 & 12 & SS  \\  \hline
\hline
\end{tabular}
\end{tiny}
\vspace{2mm}
\caption{Small examples of $\Delta\#\Delta^{-1}$ for $\Delta$ non-reversible.}
\label{T4}
\end{table}
\end{center}

\end{document}